\tikzset{->-/.style={decoration={
  markings,
  mark=at position .5 with {\arrow{>}}},postaction={decorate}}}
\newcolumntype{?}{!{\vrule width 3pt}}
\newtheorem{theorem}{Theorem}[section]
\newtheorem{corollary}[theorem]{Corollary}
\newtheorem{proposition}[theorem]{Proposition}
\title{Colored Motzkin Paths\\of Higher Order\footnote{Research supported by NSF Grant DMS-1559912.}}
\author{
Isaac DeJager\\
\footnotesize LeTourneau University\\[-0.8ex] 
\footnotesize Longview, Texas, U.S.A.\\[-0.8ex]
\footnotesize\tt IsaacDeJager@letu.edu
\and
Madeleine Naquin\\
\footnotesize Spring Hill College \\[-0.8ex]
\footnotesize Mobile, Alabama, U.S.A.\\[-0.8ex]
\footnotesize\tt madeleine.c.naquin@email.shc.edu
\and
Frank Seidl\\
\footnotesize University of Michigan\\[-0.8ex]
\footnotesize  Ann Arbor, Michigan, U.S.A. \\[-0.8ex]
\footnotesize\tt fcseidl@umich.edu
\and
Paul Drube\\
\footnotesize Valparaiso University\\[-0.8ex] 
\footnotesize Valparaiso, Indiana, U.S.A. \\[-0.8ex]
\footnotesize\tt paul.drube@valpo.edu
}
\begin{document}

\maketitle

\begin{abstract}
Motzkin paths of order-$\ell$ are a generalization of Motzkin paths that use steps $U=(1,1)$, $L=(1,0)$, and $D_i=(1,-i)$ for every positive integer $i \leq \ell$.  We further generalize order-$\ell$ Motzkin paths by allowing for various coloring schemes on the edges of our paths.  These $(\vec{\alpha},\vec{\beta})$-colored Motzkin paths may be enumerated via proper Riordan arrays, mimicking the techniques of Aigner in his treatment of ``Catalan-like numbers".  After an investigation of their associated Riordan arrays, we develop bijections between $(\vec{\alpha},\vec{\beta})$-colored Motzkin paths and a variety of well-studied combinatorial objects.  Specific coloring schemes $(\vec{\alpha},\vec{\beta})$ allow us to place $(\vec{\alpha},\vec{\beta})$-colored Motzkin paths in bijection with different subclasses of generalized $k$-Dyck paths, including $k$-Dyck paths that remain weakly above horizontal lines $y=-a$, $k$-Dyck paths whose peaks all have the same height modulo-$k$, and Fuss-Catalan generalizations of Fine paths. A general bijection is also developed between $(\vec{\alpha},\vec{\beta})$-colored Motzkin paths and certain subclasses of $k$-ary trees.
\end{abstract}

\section{Introduction}
\label{sec: intro}

A Motzkin path of length $n$ and height $m$ is an integer lattice path from $(0,0)$ to $(n,m)$ that uses the step set $\lbrace U = (1,1), L=(1,0), D=(1,-1) \rbrace$ and remains weakly above the horizontal line $y=0$.  We denote the set of all such Motzkin paths by $\mathcal{M}_{n,m}$ and let $\vert \mathcal{M}_{n,m} \vert = M_{n,m}$.  The cardinalities $M_{n,0} = M_n$ correspond to the Motzkin numbers, a well-known integer sequence that begins (for $n \geq 0$) as $1, 1, 2, 4, 9, 21, 51, 127, \hdots$.  For more information about the Motzkin numbers and their various combinatorial interpretations, see Aigner \cite{Aigner1}, Bernhart \cite{Bernhart}, and Donaghey and Shapiro \cite{DonS}. 

For any $\alpha,\beta \geq 0$, an element of $\mathcal{M}_{n,m}$ is said to be an $(\alpha,\beta)$-colored Motzkin path of length $n$ and height $m$ if each of its $L$ steps at height $y=0$ is labeled by one of $\alpha$ colors and each of its $L$ steps at height $y > 0$ is labeled by one of $\beta$ colors.  By the ``height" of a step in a lattice path we mean the $y$-coordinate of its right endpoint.  We denote the set of all $(\alpha,\beta)$-colored Motzkin paths by $\mathcal{M}_{n,m}(\alpha,\beta)$ and let $\vert \mathcal{M}_{n,m}(\alpha,\beta) \vert = M_{n,m}(\alpha,\beta)$.  By analogy with above, for fixed $\alpha,\beta$ we henceforth refer to the integer sequences $\lbrace M_{n,0}(\alpha,\beta) \rbrace_{n=0}^\infty$ as the $(\alpha,\beta)$-colored Motzkin numbers.  See Figure \ref{1fig: Colored Motzkin Paths Example} for an illustration of the set $\mathcal{M}_{3,0}(1,2)$.  That example establishes our convention of using positive integers for our ``colors".

\begin{figure}[ht!]
\centering
\begin{tikzpicture}
    [scale=.7, auto=left, every node/.style = {circle, fill=black, inner sep=1.25pt}]
    \draw[thick, dotted, color = black] (0,0) to (3,0);
    \node(0*) at (0,0) {};
    \node(1*) at (1,0) {};
    \node(2*) at (2,0) {};
    \node(3*) at (3,0) {};
    \draw[thick] (0*) to node[midway,above,fill=none,font=\small] {1} (1*);
    \draw[thick] (1*) to node[midway,above,fill=none,font=\small] {1} (2*);
    \draw[thick] (2*) to node[midway,above,fill=none,font=\small] {1} (3*);
\end{tikzpicture}
\hspace{.15in}
\begin{tikzpicture}
	[scale=.7,auto=left,every node/.style={circle, fill=black,inner sep=1.25pt}]
	\draw[thick,dotted,color=black] (0,0) to (3,0);
	\node (0*) at (0,0) {};
	\node (1*) at (1,0) {};
	\node (2*) at (2,1) {};
	\node (3*) at (3,0) {};
	\draw[thick] (0*) to node[midway,above,fill=none,font=\small] {1} (1*);
	\draw[thick] (1*) to (2*);
	\draw[thick] (2*) to (3*);
\end{tikzpicture}
\hspace{.15in}
\begin{tikzpicture}
	[scale=.7,auto=left,every node/.style={circle, fill=black,inner sep=1.25pt}]
	\draw[thick,dotted,color=black] (0,0) to (3,0);
	\node (0*) at (0,0) {};
	\node (1*) at (1,1) {};
	\node (2*) at (2,1) {};
	\node (3*) at (3,0) {};
	\draw[thick] (0*) to (1*);
	\draw[thick] (1*) to node[midway,above,fill=none,font=\small] {1} (2*);
	\draw[thick] (2*) to (3*);
\end{tikzpicture}
\hspace{.15in}
\begin{tikzpicture}
	[scale=.7,auto=left,every node/.style={circle, fill=black,inner sep=1.25pt}]
	\draw[thick,dotted,color=black] (0,0) to (3,0);
	\node (0*) at (0,0) {};
	\node (1*) at (1,1) {};
	\node (2*) at (2,1) {};
	\node (3*) at (3,0) {};
	\draw[thick] (0*) to (1*);
	\draw[thick] (1*) to node[midway,above,fill=none,font=\small] {2} (2*);
	\draw[thick] (2*) to (3*);
\end{tikzpicture}
\hspace{.15in}
\begin{tikzpicture}
	[scale=.7,auto=left,every node/.style={circle, fill=black,inner sep=1.25pt}]
	\draw[thick,dotted,color=black] (0,0) to (3,0);
	\node (0*) at (0,0) {};
	\node (1*) at (1,1) {};
	\node (2*) at (2,0) {};
	\node (3*) at (3,0) {};
	\draw[thick] (0*) to (1*);
	\draw[thick] (1*) to (2*);
	\draw[thick] (2*) to node[midway,above,fill=none,font=\small] {1} (3*);
\end{tikzpicture}
    \caption{All $(1,2)$-colored Motzkin paths in the set $\mathcal{M}_{3,0}(1,2)$.}
    \label{1fig: Colored Motzkin Paths Example}
\end{figure}
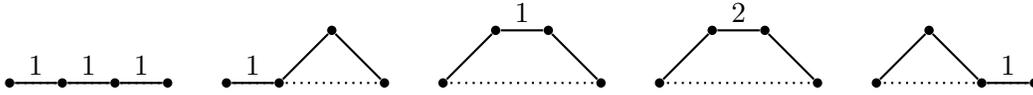

Our definition of colored Motzkin paths specializes to the ``$k$-colored Motzkin paths" of Barrucci et al. \cite{BDPP} or Sapounakis and Tsikouras \cite{ST1,ST2} when $\alpha = \beta = k$.  Our definition is also distinct from the $(u,l,d)$-colored Motzkin paths of Woan \cite{Woan1,Woan2} or Mansour, Schork and Sun \cite{MSS}.  In those papers, $U$, $L$, and $D$ steps are all colored and there is no distinction between $L$ steps at various heights.  Our particular generalization of Motzkin paths has been chosen to be in alignment with the Riordan array-oriented methodologies of Aigner \cite{Aigner2}.

It is clear that $M_{n,m}(\alpha,\beta) = 0$ unless $0 \leq m \leq n$, as well as that $M_{0,0}(\alpha,\beta) = 1$.  For $n \geq 1$, the cardinalities $M_{n,m}(\alpha,\beta)$ may be computed via the recursion of Proposition \ref{1thm: Motzkin triangle recursion}, variations of which already appear elsewhere.

\begin{proposition}
\label{1thm: Motzkin triangle recursion}
For all $n \geq 1$ and $0 \leq m \leq n$,
$$M_{n,m}(\alpha,\beta) =
\begin{cases}
M_{n-1,m-1}(\alpha,\beta) + \beta \kern+1pt M_{n-1,m}(\alpha,\beta) + M_{n-1,m+1}(\alpha,\beta) & \text{if } m \geq 1; \\[4pt]
\alpha \kern+1pt M_{n-1,0}(\alpha,\beta) + M_{n-1,1}(\alpha,\beta) & \text{if } m=0.
\end{cases}$$
\end{proposition}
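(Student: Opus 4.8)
The plan is to establish the recursion by a \emph{last-step decomposition}, partitioning each path in $\mathcal{M}_{n,m}(\alpha,\beta)$ according to the type of its terminal step and showing that deletion of that step sets up a (weighted) bijection with colored Motzkin paths of length $n-1$. Throughout I would use the conventions already noted in the excerpt, namely that $M_{n,m}(\alpha,\beta) = 0$ whenever $m < 0$ or $m > n$, so that any out-of-range terms appearing below vanish automatically.

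First, fix $n \geq 1$ and $0 \leq m \leq n$ and let $P \in \mathcal{M}_{n,m}(\alpha,\beta)$ be arbitrary. Its terminal step ends at $(n,m)$ and is one of $U$, $L$, or $D$; deleting this step leaves a lattice path $P'$ from $(0,0)$ that uses only the allowed steps and still remains weakly above $y = 0$, hence is an (uncolored) Motzkin path of length $n-1$ ending at the point $(n-1, m')$ that preceded the terminal step. The essential observation is that deleting the terminal step changes neither the heights of the remaining steps nor, therefore, the set of colors legally available to each of them; so the coloring that $P$ induced on $P'$ is still valid and $P' \in \mathcal{M}_{n-1,m'}(\alpha,\beta)$.

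Next I would run through the three possibilities for the terminal step. If it is a $U$ step then $m' = m-1$, and $P \mapsto P'$ is a bijection onto $\mathcal{M}_{n-1,m-1}(\alpha,\beta)$, contributing $M_{n-1,m-1}(\alpha,\beta)$. If it is a $D$ step then $m' = m+1$, contributing $M_{n-1,m+1}(\alpha,\beta)$. If it is an $L$ step then $m' = m$, but now the map is many-to-one: each $P' \in \mathcal{M}_{n-1,m}(\alpha,\beta)$ is the image of exactly one path $P$ per admissible color of the appended $L$ step, and the number of such colors is $\beta$ when $m \geq 1$ (the step sits at positive height) and $\alpha$ when $m = 0$. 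Summing the three disjoint contributions yields $M_{n-1,m-1}(\alpha,\beta) + \beta\, M_{n-1,m}(\alpha,\beta) + M_{n-1,m+1}(\alpha,\beta)$ in the case $m \geq 1$.

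It remains to isolate the boundary case $m = 0$: here a $U$ step is impossible, since its predecessor would sit at $(n-1,-1)$, below the line $y = 0$, so the $U$ contribution drops out and the $L$ step carries $\alpha$ colors, giving $\alpha\, M_{n-1,0}(\alpha,\beta) + M_{n-1,1}(\alpha,\beta)$. I expect no genuine difficulty here; the one place demanding care is the coloring bookkeeping in the $L$-step case, where one must confirm that height, and hence the color count $\alpha$ versus $\beta$, is determined entirely by the endpoint $(n,m)$ of the appended step and is unaffected by the deletion. Everything else is a routine verification that the three cases partition $\mathcal{M}_{n,m}(\alpha,\beta)$.
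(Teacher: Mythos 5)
Your proposal is correct and follows the same last-step decomposition that the paper uses: partition $\mathcal{M}_{n,m}(\alpha,\beta)$ by the type (and, for $L$ steps, the color) of the terminal step, delete that step, and observe that the coloring constraints on the remaining steps are unaffected. Your treatment of the $m=0$ boundary case and the $\alpha$-versus-$\beta$ color count matches the paper's argument, so there is nothing to add.
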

\begin{proof}
For $m \geq 1$, partition the paths of $\mathcal{M}_{n,m}(\alpha,\beta)$ according to their final step.  The subset of those paths that end with a $U$ step are in bijection with $\mathcal{M}_{n-1,m-1}(\alpha,\beta)$, those that end with a $D$ step are in bijection with $\mathcal{M}_{n-1,m+1}(\alpha,\beta)$, and those that end with a $L$ step of a fixed color are in bijection with $\mathcal{M}_{n-1,m}(\alpha,\beta)$, for each of the $\beta$ colors.  The $m=1$ case is similar, although here the elements of $\mathcal{M}_{n,m}(\alpha,\beta)$ cannot end in a $U$ step and the final $L$ steps may carry $\alpha$ possible colors. 
\end{proof}

Using Proposition \ref{1thm: Motzkin triangle recursion}, one may define an infinite lower-triangular array whose $(n,m)$-entry is $M_{n,m}(\alpha,\beta)$.  See Figure \ref{1fig: colored Motzkin triangle} for the first four rows of this triangle, which we refer to as the $(\alpha,\beta)$-colored Motzkin triangle.  Notice that the upper-leftmost nonzero entry of this triangle corresponds to $(n,m) = (0,0)$.

\begin{figure}[ht!]
\begin{center}
\def\arraystretch{2}
\begin{tabular}{l l l l}
$1$ & & & \\ [-10pt]
$\alpha$ & $1$ & & \\ [-10pt]
$\alpha^2+1$ & $\alpha + \beta$ & $1$ & \\ [-10pt]
$\alpha^3 + 2\alpha + \beta $ & $\alpha^2 + \alpha \beta + \beta^2 + 2$ & $\alpha + 2 \beta$ & $1$ \\
\end{tabular}
\end{center}

\vspace{-14pt}

\caption{The first four nonzero rows of the $(\alpha,\beta)$-colored Motzkin triangle.}
\label{1fig: colored Motzkin triangle}
\end{figure}

We will be interested primarily in the leftmost nonzero column of the $(\alpha,\beta)$-Motzkin triangle.  Ranging over various $(\alpha,\beta)$, these are the $(\alpha,\beta)$-colored Motzkin numbers and constitute one class of what Aigner \cite{Aigner2} refers to as ``Catalan-like numbers".  Shown in Table \ref{1tab: Catalan-like numbers} are the sequences associated with all $0 \leq \alpha,\beta \leq 4$, many of which are well-represented in the literature.

\begin{table}[ht!]
\centering
\def\arraystretch{1.3}
\scalebox{.9}{
\begin{tabular}{|c|c|c|c|c|c|}
\hline
 & \textbf{$\boldsymbol{\beta=0}$} & \textbf{$\boldsymbol{\beta=1}$} & \textbf{$\boldsymbol{\beta=2}$} & \textbf{$\boldsymbol{\beta=3}$} & \textbf{$\boldsymbol{\beta=4}$} \\ \hline
\textbf{$\boldsymbol{\alpha=0}$} & A126120 & Riordan numbers ($R_n$) & Fine numbers ($F_n$) & A1177641 & A185132 \\ \hline
\textbf{$\boldsymbol{\alpha=1}$} & A001405 & Motzkin numbers ($M_n$) & Catalan numbers ($C_n$) & A033321 & - \\ \hline
\textbf{$\boldsymbol{\alpha=2}$} & A054341 & A005773 & Catalan numbers ($C_{n+1}$) & A007317 & A033543 \\ \hline
\textbf{$\boldsymbol{\alpha=3}$} & A126931 & A059738 & $\binom{2n+1}{n+1}$ & A002212 & A064613 \\ \hline
\textbf{$\boldsymbol{\alpha=4}$} & - & - & A049027 & A026378 & A005572 \\ \hline
\end{tabular}}
\caption{Integer sequences corresponding to the $(\alpha,\beta)$-colored Motzkin numbers, for various choices of $(\alpha,\beta)$.  Numbered entries correspond to OEIS \cite{OEIS}.  Dashes correspond to sequences that currently do not appear on OEIS.}
\label{1tab: Catalan-like numbers}
\end{table}

As with the Catalan-like numbers of Aigner \cite{Aigner2}, our notion of $(\alpha,\beta)$-colored Motzkin paths is most efficiently recast within the language of proper Riordan arrays.  Let $d(t),h(t)$ be a pair of formal power series such that $d(0) \neq 0$, $h(0)=0$ and $h'(0) \neq 0$.  The proper Riordan array $\mathcal{R}(d(t),h(t))$ associated with these power series is an infinite, lower-triangular array whose $(i,j)$-entry is $d_{i,j} = [t^i]d(t)h(t)^j$.  Here was adopt the standard convention where $[t^i]p(t)$ denotes the coefficient of the $t^i$ in the power series $p(t)$.  See Rogers \cite{Rogers} or Merlini et al. \cite{MRSV} for more background information on Riordan arrays. 

Fundamental to the theory of Riordan arrays is the fact that every proper Riordan array $\mathcal{R}(d(t),h(t))$ is uniquely determined by a pair of power series $A(t) = \sum_{i=0}^\infty a_i t^i$ and $Z(t) = \sum_{i=0}^\infty z_i t^i$ such that 

\begin{equation}
\label{1eq: A and Z sequence recursion}
d_{i,j}=
\begin{cases}
a_0 \kern+1pt d_{i-1,j-1} + a_1 \kern+1pt d_{i-1,j} + a_2 \kern+1pt d_{i-1,j+1} + \hdots & \text{for all } j \geq 1; \\[4pt]
z_0 \kern+1pt d_{i-1,0} + z_1 \kern+1pt d_{i-1,1} + z_2 \kern+1pt d_{i-1,2} + \hdots & \text{for } j=0.
\end{cases}
\end{equation}

These series are referred to as the $A$-sequence and $Z$-sequence of $\mathcal{R}(d(t),h(t))$, respectively.  It may be shown that the $A$- and $Z$-sequences of $\mathcal{R}(d(t),h(t))$ satisfy

\begin{equation}
\label{1eq: A and Z sequences vs d(t),h(t)}
h(t) = t \kern+1pt A(h(t)), \hspace{.5in} d(t) = \frac{d(0)}{1-t \kern+1pt Z(h(t))}.
\end{equation}

With this terminology in hand, Proposition \ref{1thm: Motzkin triangle recursion} immediately guarantees that the $(\alpha,\beta)$-colored Motzkin triangle is a proper Riordan array for every choice $\alpha,\beta \geq 0$.  In particular, the $(\alpha,\beta)$-colored Motzkin triangle is the proper Riordan array with $A$-sequence $A(t) = 1 + \beta \kern+1pt t + t^2$ and $Z$-sequence $Z(t) = \alpha + t$.

\subsection{Outline of Paper}
\label{subsec: outline of paper}

The goal of this paper is to adapt the aforementioned phenomena to ``higher-order" Motzkin paths, a generalization of traditional Motzkin paths whose step set includes a down step $D_i = (1,-i)$ for every positive integer $1 \leq i \leq \ell$ up to some fixed upper bound $\ell$.  Section \ref{sec: higher-order motzkin paths} defines the relevant notion of coloring for higher-order Motzkin paths, describes the proper Riordan arrays that enumerate these colored paths, and proves a series of general results about those proper Riordan arrays.  Section \ref{sec: combinatorial interpretations} then introduces a series of combinatorial interpretations for colored higher-order Motzkin paths that directly generalize the combinatorial interpretations suggested by Table \ref{1tab: Catalan-like numbers}. In particular, colored higher-order Motzkin paths are placed in bijection with various classes of generalized $k$-Dyck paths, entirely new generalizations of Fine paths, $k$-Dyck paths whose peaks only occur at specific heights, and various subsets of $k$-ary trees.  Appendix \ref{sec: appendix} closes the paper by comparing the first columns of our proper Riordan arrays against sequences in OEIS \cite{OEIS} for various ``easy" colorations, providing impetus for future investigations.

\section{Higher-Order Motzkin Paths}
\label{sec: higher-order motzkin paths}

Fix $\ell \geq 1$.  An \textbf{order-$\ell$ Motzkin path} of length $n$ and height $m$ is an integer lattice path from $(0,0)$ to $(n,m)$ that uses step set $\lbrace U = (1,1), D_0 = (1,0), D_1 = (1,-1), \hdots , D_{\ell} = (1,-\ell) \rbrace$ and remains weakly above the horizontal line $y = 0$.  We denote the set of all such paths by $\mathcal{M}_{n,m}^\ell$, and let $\vert \mathcal{M}_{n,m}^\ell \vert = M_{n,m}^\ell$.  Note that order-$1$ Motzkin paths correspond to the traditional notion of Motzkin paths, so that $M_{n,0}^1 = M_{n,0}$ are the Motzkin numbers.

For $\ell > 1$, our notion of order-$\ell$ Motzkin paths are distinct from the ``higher-rank" Motzkin paths studied by Mansour, Schork and Sun \cite{MSS} or Sapounakis and Tsikouras \cite{ST1}. In particular, our order-$\ell$ Motzkin paths don't allow for up steps of multiple slopes.  In the limit of $\ell \rightarrow \infty$, our order-$\ell$ Motzkin paths correspond to the \L ukasiewicz paths investigated by Cheon, Kim and Shapiro \cite{CKS}.

We now look to color order-$\ell$ Motzkin paths in a way that directly generalizes the Riordan array properties of Section \ref{sec: intro}.  So fix $\ell \geq 1$, and let $\vec{\alpha} = (\alpha_0,\hdots,\alpha_{\ell-1})$, $\vec{\beta} = (\beta_0,\hdots,\beta_{\ell-1})$ be any pair of $\ell$-tuples of non-negative integers.  An \textbf{$(\vec{\alpha},\vec{\beta})$-colored Motzkin path} is an element of $\mathcal{M}_{n,m}^\ell$ where each $D_i$ step that ends at height $y=0$ is labeled by one of $\alpha_i$ colors, and each $D_i$ step that ends at height $y>0$ is labeled by one of $\beta_i$ colors, for each $0 \leq i \leq \ell-1$.  We denote the set of all $(\vec{\alpha},\vec{\beta})$-colored Motzkin paths by $\mathcal{M}_{n,m}^\ell(\vec{\alpha},\vec{\beta})$ and let $\vert \mathcal{M}_{n,m}^\ell(\vec{\alpha},\vec{\beta}) \vert = M_{n,m}^\ell(\vec{\alpha},\vec{\beta})$.  For fixed $\ell,\vec{\alpha},\vec{\beta}$, we refer to the integer sequences $\lbrace M_{n,0}^\ell (\vec{\alpha},\vec{\beta}) \rbrace_{n=0}^\infty$ as the \textbf{$(\vec{\alpha},\vec{\beta})$-colored Motzkin numbers} of order-$\ell$.

See Figure \ref{2fig: higher-order colored Motzkin paths example} for an example of order-$2$ Motzkin paths.  Notice that the only steps which fail to receive colors are $U$ steps and down steps $D_\ell$ of maximal negative slope.

\begin{figure}[ht!]
\centering
\begin{tikzpicture}
    [scale=.55, auto=left, every node/.style = {circle, fill=black, inner sep=1.25pt}]
    \draw[thick, dotted, color = black] (0,0) to (3,0);    
    \node(0*) at (0,0) {};
    \node(1*) at (1,0) {};
    \node(2*) at (2,0) {};
    \node(3*) at (3,0) {};    
    \draw[thick] (0*) to (1*) to node[midway,above,fill=none,font=\small] {1} (0*);
    \draw[thick] (1*) to (2*) to node[midway,above,fill=none,font=\small] {1} (1*);
    \draw[thick] (2*) to (3*) to node[midway,above,fill=none,font=\small] {1} (2*);
\end{tikzpicture}
\hspace{.22in}
\begin{tikzpicture}
    [scale=.55, auto=left, every node/.style = {circle, fill=black, inner sep=1.25pt}]
    \draw[thick, dotted, color = black] (0,0) to (3,0);    
    \node(0*) at (0,0) {};
    \node(1*) at (1,0) {};
    \node(2*) at (2,1) {};
    \node(3*) at (3,0) {};    
    \draw[thick] (0*) to (1*) to node[midway,above,fill=none,font=\small] {1} (0*);
    \draw[thick] (1*) to (2*);
    \draw[thick] (2*) to (3*) to node[midway,above,fill=none,font=\small] {1} (2*);
\end{tikzpicture}
\hspace{.22in}
\begin{tikzpicture}
    [scale=.55, auto=left, every node/.style = {circle, fill=black, inner sep=1.25pt}]
    \draw[thick, dotted, color = black] (0,0) to (3,0);
    \node(0*) at (0,0) {};
    \node(1*) at (1,0) {};
    \node(2*) at (2,1) {};
    \node(3*) at (3,0) {};
    \draw[thick] (0*) to (1*) to node[midway,above,fill=none,font=\small] {1} (0*);
    \draw[thick] (1*) to (2*);
    \draw[thick] (2*) to (3*) to node[midway,above,fill=none,font=\small] {2} (2*);
\end{tikzpicture}
\hspace{.22in}
\begin{tikzpicture}
    [scale=.55, auto=left, every node/.style = {circle, fill=black, inner sep=1.25pt}]
    \draw[thick, dotted, color = black] (0,0) to (3,0);
    \node(0*) at (0,0) {};
    \node(1*) at (1,1) {};
    \node(2*) at (2,0) {};
    \node(3*) at (3,0) {};
    \draw[thick] (0*) to (1*);
    \draw[thick] (1*) to (2*) to node[midway,above,fill=none,font=\small] {1} (1*);
    \draw[thick] (2*) to (3*) to node[midway,above,fill=none,font=\small] {1} (2*);
\end{tikzpicture}
\hspace{.22in}
\begin{tikzpicture}
    [scale=.55, auto=left, every node/.style = {circle, fill=black, inner sep=1.25pt}]
    \draw[thick, dotted, color = black] (0,0) to (3,0);
    \node(0*) at (0,0) {};
    \node(1*) at (1,1) {};
    \node(2*) at (2,0) {};
    \node(3*) at (3,0) {};
    \draw[thick] (0*) to (1*);
    \draw[thick] (1*) to (2*) to node[midway,above,fill=none,font=\small] {2} (1*);
    \draw[thick] (2*) to (3*) to node[midway,above,fill=none,font=\small] {1} (2*);
\end{tikzpicture}
\hspace{.22in}
\begin{tikzpicture}
    [scale=.55, auto=left, every node/.style = {circle, fill=black, inner sep=1.25pt}]
    \draw[thick, dotted, color = black] (0,0) to (3,0);
    \node(0*) at (0,0) {};
    \node(1*) at (1,1) {};
    \node(2*) at (2,1) {};
    \node(3*) at (3,0) {};
    \draw[thick] (0*) to (1*);
    \draw[thick] (1*) to (2*) to node[midway,above,fill=none,font=\small] {1} (1*);
    \draw[thick] (2*) to (3*) to node[midway,above,fill=none,font=\small] {1} (2*);
\end{tikzpicture}

\vspace{.3in}

\begin{tikzpicture}
    [scale=.55, auto=left, every node/.style = {circle, fill=black, inner sep=1.25pt}]
    \draw[thick, dotted, color = black] (0,0) to (3,0);
    \node(0*) at (0,0) {};
    \node(1*) at (1,1) {};
    \node(2*) at (2,1) {};
    \node(3*) at (3,0) {};
    \draw[thick] (0*) to (1*);
    \draw[thick] (1*) to (2*) to node[midway,above,fill=none,font=\small] {2} (1*);
    \draw[thick] (2*) to (3*) to node[midway,above,fill=none,font=\small] {1} (2*);
\end{tikzpicture}
\hspace{.22in}
\begin{tikzpicture}
    [scale=.55, auto=left, every node/.style = {circle, fill=black, inner sep=1.25pt}]
    \draw[thick, dotted, color = black] (0,0) to (3,0);
    \node(0*) at (0,0) {};
    \node(1*) at (1,1) {};
    \node(2*) at (2,1) {};
    \node(3*) at (3,0) {};
    \draw[thick] (0*) to (1*);
    \draw[thick] (1*) to (2*) to node[midway,above,fill=none,font=\small] {3} (1*);
    \draw[thick] (2*) to (3*) to node[midway,above,fill=none,font=\small] {1} (2*);
\end{tikzpicture}
\hspace{.22in}
\begin{tikzpicture}
    [scale=.55, auto=left, every node/.style = {circle, fill=black, inner sep=1.25pt}]
    \draw[thick, dotted, color = black] (0,0) to (3,0);
    \node(0*) at (0,0) {};
    \node(1*) at (1,1) {};
    \node(2*) at (2,1) {};
    \node(3*) at (3,0) {};
    \draw[thick] (0*) to (1*);
    \draw[thick] (1*) to (2*) to node[midway,above,fill=none,font=\small] {1} (1*);
    \draw[thick] (2*) to (3*) to node[midway,above,fill=none,font=\small] {2} (2*);
\end{tikzpicture}
\hspace{.22in}
\begin{tikzpicture}
    [scale=.55, auto=left, every node/.style = {circle, fill=black, inner sep=1.25pt}]
    \draw[thick, dotted, color = black] (0,0) to (3,0);
    \node(0*) at (0,0) {};
    \node(1*) at (1,1) {};
    \node(2*) at (2,1) {};
    \node(3*) at (3,0) {};
    \draw[thick] (0*) to (1*);
    \draw[thick] (1*) to (2*) to node[midway,above,fill=none,font=\small] {2} (1*);
    \draw[thick] (2*) to (3*) to node[midway,above,fill=none,font=\small] {2} (2*);
\end{tikzpicture}
\hspace{.22in}
\begin{tikzpicture}
    [scale=.55, auto=left, every node/.style = {circle, fill=black, inner sep=1.25pt}]
    \draw[thick, dotted, color = black] (0,0) to (3,0);
    \node(0*) at (0,0) {};
    \node(1*) at (1,1) {};
    \node(2*) at (2,1) {};
    \node(3*) at (3,0) {};
    \draw[thick] (0*) to (1*);
    \draw[thick] (1*) to (2*) to node[midway,above,fill=none,font=\small] {3} (1*);
    \draw[thick] (2*) to (3*) to node[midway,above,fill=none,font=\small] {2} (2*);
\end{tikzpicture}
\hspace{.22in}
\begin{tikzpicture}
    [scale=.55, auto=left, every node/.style = {circle, fill=black, inner sep=1.25pt}]
    \draw[thick, dotted, color = black] (0,0) to (3,0);
    \node(0*) at (0,0) {};
    \node(1*) at (1,1) {};
    \node(2*) at (2,2) {};
    \node(3*) at (3,0) {};
    \draw[thick] (0*) to (1*);
    \draw[thick] (1*) to (2*);
    \draw[thick] (2*) to (3*);
\end{tikzpicture}
\caption{All paths in $\mathcal{M}_{3,0}^2 (\vec{\alpha},\vec{\beta})$ with $\vec{\alpha} = (1,2)$, $\vec{\beta} = (3,3)$.}
\label{2fig: higher-order colored Motzkin paths example}
\end{figure}
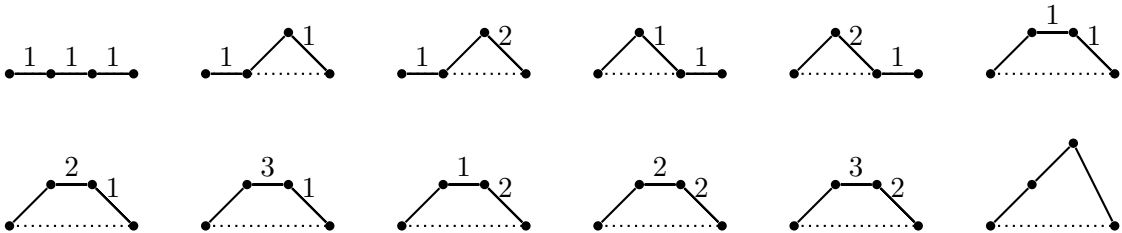

It is once again clear that $M_{n,m}^\ell(\vec{\alpha},\vec{\beta})=0$ unless $0 \leq m \leq n$, as well as that $M_{0,0}^\ell (\vec{\alpha},\vec{\beta}) = 1$.  For any pair $\vec{\alpha},\vec{\beta}$, we may then assemble an infinite, lower-triangular array whose $(n,m)$ entry is $M_{n,m}^\ell(\vec{\alpha},\vec{\beta})$.  We call this triangle the \textbf{$(\vec{\alpha},\vec{\beta})$-colored Motzkin triangle}.

As with the $(\alpha,\beta)$-colored Motzkin triangle of Section \ref{sec: intro}, the $(\vec{\alpha},\vec{\beta})$-colored Motzkin triangle is a proper Riordan array for every choice of $\vec{\alpha}$ and $\vec{\beta}$:

\begin{proposition}
\label{2thm: higher-order Motzkin triangle as Riordan array}
Fix $\ell \geq 1$, and let $\vec{\alpha} = (\alpha_0,\hdots,\alpha_{\ell-1})$, $\vec{\beta} = (\beta_0,\hdots,\beta_{\ell-1})$ be any pair of $\ell$-tuples of non-negative integers.  Then the $(\vec{\alpha},\vec{\beta})$-colored Motzkin triangle is a proper Riordan array with $A$- and $Z$-sequences

$$A(t) = 1 + \beta_0 \kern+1pt t + \hdots + \beta_{\ell-1} \kern+1pt t^\ell + t^{\ell+1};$$

\vspace{-.15in}

$$Z(t) = \alpha_0 + \alpha_1 \kern+1pt t + \hdots + \alpha_{\ell-1} \kern+1pt t^{\ell-1} + t^\ell.$$

\end{proposition}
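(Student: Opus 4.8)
The plan is to establish a height-refined recursion for $M_{n,m}^\ell(\vec\alpha,\vec\beta)$ by partitioning paths according to their final step, exactly as in the proof of Proposition~\ref{1thm: Motzkin triangle recursion}, and then to observe that this recursion is literally an instance of the $A$- and $Z$-sequence recursion~\eqref{1eq: A and Z sequence recursion}. Since a lower-triangular array with nonzero corner entry is a proper Riordan array precisely when it admits such $A$- and $Z$-sequences (with $a_0 \neq 0$), and since $M_{0,0}^\ell(\vec\alpha,\vec\beta) = 1$, reading off the coefficients will immediately yield both the claim that the triangle is a proper Riordan array and the explicit forms of $A(t)$ and $Z(t)$.

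First I would treat the interior columns $m \geq 1$. Fix a path in $\mathcal M_{n,m}^\ell(\vec\alpha,\vec\beta)$ and delete its final step. If that step is $U$, the truncated path lies in $\mathcal M_{n-1,m-1}^\ell(\vec\alpha,\vec\beta)$ and carries no color, contributing a coefficient $1$. If the final step is $D_i$ with $0 \le i \le \ell$, then it ends at height $m > 0$ and the truncated path lies in $\mathcal M_{n-1,m+i}^\ell(\vec\alpha,\vec\beta)$; for $0 \le i \le \ell-1$ this step carries one of $\beta_i$ colors, while $D_\ell$ carries no color and contributes $1$. This gives $M_{n,m}^\ell(\vec\alpha,\vec\beta) = M_{n-1,m-1}^\ell(\vec\alpha,\vec\beta) + \sum_{i=0}^{\ell-1}\beta_i\, M_{n-1,m+i}^\ell(\vec\alpha,\vec\beta) + M_{n-1,m+\ell}^\ell(\vec\alpha,\vec\beta)$. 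Comparing term by term against the $j \ge 1$ case of \eqref{1eq: A and Z sequence recursion}, where $a_k$ multiplies $d_{n-1,m-1+k}$, the $U$ step gives $a_0 = 1$, each $D_i$ gives $a_{i+1} = \beta_i$, and $D_\ell$ gives $a_{\ell+1} = 1$, so that $A(t) = 1 + \beta_0 t + \cdots + \beta_{\ell-1} t^\ell + t^{\ell+1}$.

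The boundary column $m = 0$ is handled the same way. A path ending at height $0$ cannot end in a $U$ step, and a final $D_i$ step now ends at height $0$, so for $0 \le i \le \ell-1$ it carries one of $\alpha_i$ colors and for $i = \ell$ it carries none; the truncated path lies in $\mathcal M_{n-1,i}^\ell(\vec\alpha,\vec\beta)$. Hence $M_{n,0}^\ell(\vec\alpha,\vec\beta) = \sum_{i=0}^{\ell-1}\alpha_i\, M_{n-1,i}^\ell(\vec\alpha,\vec\beta) + M_{n-1,\ell}^\ell(\vec\alpha,\vec\beta)$, which matches the $j = 0$ case of \eqref{1eq: A and Z sequence recursion} with $z_i = \alpha_i$ for $0 \le i \le \ell-1$, $z_\ell = 1$, and $z_j = 0$ otherwise; thus $Z(t) = \alpha_0 + \alpha_1 t + \cdots + \alpha_{\ell-1} t^{\ell-1} + t^\ell$.

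I expect the only real obstacle to be the bookkeeping of the index shift rather than any genuine difficulty: the drop-$i$ step $D_i$ contributes to $a_{i+1}$, not $a_i$, because \eqref{1eq: A and Z sequence recursion} anchors $a_0$ at the up-diagonal entry $d_{n-1,m-1}$, so the uncolored steps $U$ and $D_\ell$ are exactly what produce the leading $1$ and the top terms $t^{\ell+1}$ and $t^\ell$ of $A$ and $Z$. I would also take care to note that the validity of the partition relies only on the endpoints of each step having nonnegative height, and that invoking properness uses the converse of the characterization quoted around \eqref{1eq: A and Z sequence recursion}---namely that $d_{0,0} = 1 \neq 0$ together with $a_0 = 1 \neq 0$ guarantees the array is a genuine proper Riordan array, with $d(t)$ and $h(t)$ then recoverable from \eqref{1eq: A and Z sequences vs d(t),h(t)}.
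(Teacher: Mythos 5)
Your proposal is correct and follows essentially the same route as the paper: partition paths by their final step to obtain the height-refined recursion, then read off the $A$- and $Z$-sequence coefficients from the $j\geq 1$ and $j=0$ cases of the defining recursion. Your extra care with the index shift (the $D_i$ step contributing to $a_{i+1}$) and with the uncolored steps $U$ and $D_\ell$ producing the leading and trailing $1$'s matches the paper's argument exactly.
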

\begin{proof}
In a manner similar to the proof of Proposition \ref{1thm: Motzkin triangle recursion}, we demonstrate the recurrences

\vspace{.1in}

\noindent
\scalebox{.85}{
$M_{n,m}^\ell(\vec{\alpha},\vec{\beta}) = 
\begin{cases}
M_{n-1,m-1}^\ell(\vec{\alpha},\vec{\beta}) + \beta_0 \kern+1pt M_{n-1,m}^\ell(\vec{\alpha},\vec{\beta}) + \hdots + \beta_{\ell-1} \kern+1pt M_{n-1,m+\ell-1}^\ell(\vec{\alpha},\vec{\beta}) + M_{n-1,m+\ell}^\ell(\vec{\alpha},\vec{\beta}) & \text{if } m \geq 1; \\[6pt]
\alpha_0 \kern+1pt M_{n-1,0}^\ell(\vec{\alpha},\vec{\beta}) + \alpha_1 \kern+1pt M_{n-1,1}^\ell(\vec{\alpha},\vec{\beta}) \hdots + \alpha_{\ell-1} \kern+1pt M_{n-1,\ell-1}^\ell(\vec{\alpha},\vec{\beta}) + M_{n-1,\ell}^\ell(\vec{\alpha},\vec{\beta}) & \text{if } m =1.
\end{cases}$}

\vspace{.1in}

For $m \geq 1$, we partition the paths of $\mathcal{M}_{n,m}^\ell(\vec{\alpha},\vec{\beta})$ according to their final step.  Those paths that end with a $U$ step are in bijection with $\mathcal{M}_{n-1,m-1}^\ell(\vec{\alpha},\vec{\beta})$, and those that end with a $D_i$ step of a fixed color are in bijection with $\mathcal{M}_{n-1,m+i}^\ell(\vec{\alpha},\vec{\beta})$, for every $0 \leq i \leq \ell$ and for each of the $\beta_i$ colors of that $i$.  The $m=1$ case is similar, although here the elements of $\mathcal{M}_{n,m}^\ell(\vec{\alpha},\vec{\beta})$ cannot end with a $U$ step and the $D_i$ steps can carry one of $\alpha_i$ possible colors.
\end{proof}

Proposition \ref{2thm: higher-order Motzkin triangle as Riordan array} may be used to quickly generate elements of the $(\vec{\alpha},\vec{\beta})$-colored Motzkin triangle.  In the order-$\ell$ case, the $(\vec{\alpha},\vec{\beta})$-colored Motzkin numbers of order-$\ell$ constitute a $2\ell$-dimensional array of integer sequences that can be compared to previously-studied results, as in Table \ref{1tab: Catalan-like numbers}.  Appendix \ref{sec: appendix} presents a series of tables that test the resulting sequences against OEIS \cite{OEIS}, for a variety of ``nice" choices of $\vec{\alpha},\vec{\beta}$ in the $\ell=2$ case.

\subsection{General Properties of $(\vec{\alpha},\vec{\beta})$-Colored Motzkin Triangles}
\label{subsec: general properties of higher-order Motzkin triangles}

We begin by proving a number of general identities involving the $(\vec{\alpha},\vec{\beta})$-colored Motzkin numbers and their associated Riordan arrays.  Much of what follows is most easily cast in terms of generating functions.  As such, for any $m,\ell,\vec{\alpha},\vec{\beta}$ we define the ordinary generating function $M^\ell_m(\vec{\alpha},\vec{\beta},t) = \sum_{n=0}^\infty M_{n,m}^\ell(\vec{\alpha},\vec{\beta}) \kern+1pt t^n$.

Our first result relates the $m \geq 1$ columns of the $(\vec{\alpha},\vec{\beta})$-colored Motzkin triangle to the $m=0$ column, letting us characterize the $d(t),h(t)$ series of the associated Riordan arrays.    

\begin{proposition}
\label{2thm: height-m Motzkin paths as height-0 convolutions}
Fix $\ell \geq 1$ and take any pair of $\ell$-tuples of non-negative integers $\vec{\alpha},\vec{\beta}$.  For every $m \geq 1$,

$$M_m^\ell(\vec{\alpha},\vec{\beta},t) = t \kern+2pt M_0^\ell(\vec{\alpha},\vec{\beta},t) \kern+1pt M_{m-1}^\ell(\vec{\beta},\vec{\beta},t) = t^m \kern+2pt M_0^\ell(\vec{\alpha},\vec{\beta},t) \kern+1pt M_0^\ell(\vec{\beta},\vec{\beta},t)^m.$$
\end{proposition}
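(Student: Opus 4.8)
The plan is to establish the two claimed equalities separately, with the first being the substantive combinatorial step and the second following by iteration. The key idea is a \emph{first-return decomposition} of the paths counted by $M_m^\ell(\vec{\alpha},\vec{\beta},t)$, organized by when the path last departs from height $0$. First I would focus on the leftmost equality
$$M_m^\ell(\vec{\alpha},\vec{\beta},t) = t \kern+2pt M_0^\ell(\vec{\alpha},\vec{\beta},t) \kern+1pt M_{m-1}^\ell(\vec{\beta},\vec{\beta},t).$$
Given an $(\vec{\alpha},\vec{\beta})$-colored path of height $m \geq 1$, let $(k,0)$ be the rightmost lattice point at which the path touches height $y=0$. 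The portion of the path from $(0,0)$ to $(k,0)$ is itself an $(\vec{\alpha},\vec{\beta})$-colored path of height $0$, contributing the factor $M_0^\ell(\vec{\alpha},\vec{\beta},t)$. The step immediately following $(k,0)$ must be a $U$ step (it cannot be any $D_i$, since the path never returns to or below height $0$ afterward), contributing the factor $t$ and accounting for the leading $t$.

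The heart of the argument is identifying the remaining portion, from $(k+1,1)$ to $(n,m)$. I would shift this suffix down by one unit to land at $(0,0)$, giving a lattice path of height $m-1$ that stays weakly above $y=0$ and never touches $y=0$ again after its start — except that under the downward shift, steps ending at height $1$ in the original path now end at height $0$. The crucial observation is that \emph{every} colored step in this suffix ends at height $y \geq 1$ in the original path (since the path stays strictly above height $0$ there), so every $D_i$ step in the suffix carries one of $\beta_i$ colors rather than $\alpha_i$ colors. Consequently the shifted suffix is precisely an $(\vec{\beta},\vec{\beta})$-colored path of height $m-1$, contributing $M_{m-1}^\ell(\vec{\beta},\vec{\beta},t)$. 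Because the length decomposes additively as $n = k + 1 + (\text{length of suffix})$ and the color choices along the three pieces are independent, the generating functions multiply, yielding the first equality. The main obstacle I anticipate is getting this coloring bookkeeping exactly right: one must verify that no colored step in the suffix ever ends at height $0$, which is exactly what the ``rightmost return to $0$'' choice guarantees, and that the height-$0$ prefix correctly absorbs all the $\vec{\alpha}$-colored steps.

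For the second equality I would iterate the first. Applying the identity $M_{m-1}^\ell(\vec{\beta},\vec{\beta},t) = t \kern+2pt M_0^\ell(\vec{\beta},\vec{\beta},t) \kern+1pt M_{m-2}^\ell(\vec{\beta},\vec{\beta},t)$ — which is the $m-1$ case of the first equality but now with $\vec{\alpha}$ replaced by $\vec{\beta}$, so that the prefix is also $(\vec{\beta},\vec{\beta})$-colored — and repeating, an easy induction on $m$ shows
$$M_m^\ell(\vec{\alpha},\vec{\beta},t) = t^m \kern+2pt M_0^\ell(\vec{\alpha},\vec{\beta},t) \kern+1pt M_0^\ell(\vec{\beta},\vec{\beta},t)^m.$$
The base case $m=1$ is the first equality directly, and the inductive step substitutes the first equality (with leading tuple $\vec{\beta}$) into the factor $M_{m-1}^\ell(\vec{\beta},\vec{\beta},t)$. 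I would streamline the whole proof by proving the first equality once in the general form ``for any leading tuple, peeling off one return-to-$0$ block replaces the leading tuple by $\vec{\beta}$ thereafter,'' after which both displayed formulas fall out cleanly.
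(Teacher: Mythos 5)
Your proposal is correct and follows essentially the same route as the paper: the paper decomposes a height-$m$ path at the rightmost $U$ step ending at height $1$ (which is exactly the step following your rightmost return to $y=0$) into a height-$0$ $(\vec{\alpha},\vec{\beta})$-colored prefix, a $U$ step, and a $(\vec{\beta},\vec{\beta})$-colored height-$(m-1)$ suffix, then iterates to get the second equality. Your write-up simply makes explicit the coloring bookkeeping that the paper leaves to its figure.
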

\begin{proof}
Every height-$m$ path may be decomposed into a height-$0$ path and a height-$(m-1)$ path as shown below, with the intermediate $U$ step in that image being the rightmost $U$ step that ends at height $1$.  The labels inside the boxes denote the colorings applicable to each subpath, with the second coloration changing because none of its steps terminate at an overall height of $0$.
\begin{center}
\begin{tikzpicture}
	[scale=.45,auto=left,every node/.style={circle,fill=black,inner sep=0pt,outer sep=0pt}]
	\draw[fill=gray!20!](0,0) rectangle (6,3);
	\node[fill=none] (b1) at (3,1.5) {\scalebox{1.5}{$(\vec{\alpha},\vec{\beta})$}};
	\draw[thick] (6,0) to (7,1);
	\draw[fill=gray!20!](7,1) rectangle (13,4);
	\node[fill=none] (b1) at (10,2.5) {\scalebox{1.5}{$(\vec{\beta},\vec{\beta})$}};	
	\node[inner sep=1.5pt] (1) at (0,0) {};
	\node[inner sep=1.5pt] (2) at (6,0) {};
	\node[inner sep=1.5pt] (3) at (7,1) {};
	\node[inner sep=1.5pt] (4) at (13,3) {};
	\node[inner sep=0pt,fill=none] (5) at (14.5,3) {$(n,m)$};
\end{tikzpicture}
\end{center}

The decomposition implied above immediately demonstrates the first equality.  The second equality follows from repeated application of the first equality.
\end{proof}

\begin{corollary}
\label{2thm: colored Motzkin triangles as Riordan arrays}
For any $\ell \geq 1$ and any pair of $\ell$-tuples of non-negative integers $\vec{\alpha},\vec{\beta}$, the $(\vec{\alpha},\vec{\beta})$-colored Motzkin triangle is the proper Riordan array $\mathcal{R}(d(t),h(t))$ with $d(t) = M_0^\ell(\vec{\alpha},\vec{\beta},t)$ and $h(t) = t \kern+2pt M_0^\ell (\vec{\beta},\vec{\beta},t)$.
\end{corollary}
\begin{proof}
From Proposition \ref{2thm: height-m Motzkin paths as height-0 convolutions}, we see that the $j^{th}$-column of the $(\vec{\alpha},\vec{\beta})$-colored Motzkin triangle has generating function $t^j \kern+2pt M_0^\ell(\vec{\alpha},\vec{\beta},t) \kern+1pt M_0^\ell(\vec{\beta},\vec{\beta},t)^j = M_0^\ell(\vec{\alpha},\vec{\beta},t) \kern+1pt \left( t \kern+2pt M_0^\ell(\vec{\beta},\vec{\beta},t) \right)^j$.
\end{proof}

Unfortunately, Corollary \ref{2thm: colored Motzkin triangles as Riordan arrays} is only of practical use if we have an explicit formula for the generating functions $M_0^\ell(\vec{\alpha},\vec{\beta},t)$, and such a formula will not be attempted here.  Still of conceptual interest is the well-known fact that the row-sums of the Riordan array $\mathcal{R}(d(t),h(t))$ has generating function $d(t) / (1-h(t))$.  As the row-sums of the $(\vec{\alpha},\vec{\beta})$-colored Motzkin triangle enumerate $(\vec{\alpha},\vec{\beta})$-Motzkin paths of length $n$ and any height $m \geq 0$, we have:

\begin{corollary}
\label{2thm: row-sums of Motzkin triangle}
For any $\ell \geq 1$ and any pair of $\ell$-tuples of non-negative integers $\vec{\alpha},\vec{\beta}$, the generating function for $(\vec{\alpha},\vec{\beta})$-colored Motzkin paths of length $n$ and any height $m \geq 0$ is
$$\sum_{m=0}^\infty M_m^\ell(\vec{\alpha},\vec{\beta},t) = \frac{M_0^\ell(\vec{\alpha},\vec{\beta},t)}{1-t \kern+2pt M_0^\ell(\vec{\beta},\vec{\beta},t)} = M_0^\ell(\vec{\alpha},\vec{\beta},t) \left( 1 + t \kern+2pt M_0^\ell(\vec{\beta},\vec{\beta},t) + t^2 \kern+2pt M_0^\ell(\vec{\beta},\vec{\beta},t)^2 + \hdots \right).$$
\end{corollary}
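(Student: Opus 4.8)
The plan is to invoke the two standard facts about Riordan arrays that have already been set up in the excerpt, so the corollary becomes a short deduction rather than a fresh computation. First I would recall from Corollary \ref{2thm: colored Motzkin triangles as Riordan arrays} that the $(\vec{\alpha},\vec{\beta})$-colored Motzkin triangle is exactly the proper Riordan array $\mathcal{R}(d(t),h(t))$ with $d(t) = M_0^\ell(\vec{\alpha},\vec{\beta},t)$ and $h(t) = t \kern+2pt M_0^\ell(\vec{\beta},\vec{\beta},t)$. Then I would state, as the well-known row-sum identity for Riordan arrays already quoted in the paragraph preceding the statement, that the generating function whose $n$-th coefficient is the $n$-th row sum of $\mathcal{R}(d(t),h(t))$ equals $d(t)/(1-h(t))$.

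Next I would tie the row sums to the combinatorial quantity of interest. The $n$-th row of the triangle has entries $M_{n,m}^\ell(\vec{\alpha},\vec{\beta})$ for $0 \le m \le n$, so its row sum $\sum_{m \ge 0} M_{n,m}^\ell(\vec{\alpha},\vec{\beta})$ counts precisely the $(\vec{\alpha},\vec{\beta})$-colored order-$\ell$ Motzkin paths of length $n$ of any height. Summing the column generating functions therefore gives $\sum_{m=0}^\infty M_m^\ell(\vec{\alpha},\vec{\beta},t)$, and this is exactly the row-sum generating function, so it equals $d(t)/(1-h(t))$. Substituting the explicit $d(t)$ and $h(t)$ from Corollary \ref{2thm: colored Motzkin triangles as Riordan arrays} yields
\[
\sum_{m=0}^\infty M_m^\ell(\vec{\alpha},\vec{\beta},t) = \frac{M_0^\ell(\vec{\alpha},\vec{\beta},t)}{1 - t \kern+2pt M_0^\ell(\vec{\beta},\vec{\beta},t)},
\]
which is the first claimed equality.

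Finally, the second equality is purely formal: since $h(0) = 0$ (indeed $h(t) = t \kern+2pt M_0^\ell(\vec{\beta},\vec{\beta},t)$ has zero constant term), the geometric series expansion $1/(1-h(t)) = \sum_{j \ge 0} h(t)^j$ is a valid identity of formal power series, and expanding gives the displayed sum $M_0^\ell(\vec{\alpha},\vec{\beta},t)\bigl(1 + t \kern+2pt M_0^\ell(\vec{\beta},\vec{\beta},t) + t^2 \kern+2pt M_0^\ell(\vec{\beta},\vec{\beta},t)^2 + \hdots\bigr)$. I do not expect a genuine obstacle here; the only point requiring care is the justification that summing column generating functions is legitimate, i.e.\ that for each fixed power $t^n$ only finitely many columns contribute (which holds because $M_{n,m}^\ell(\vec{\alpha},\vec{\beta}) = 0$ for $m > n$), so the interchange of summation that identifies $\sum_m M_m^\ell(\vec{\alpha},\vec{\beta},t)$ with the row-sum series is valid as an identity of formal power series. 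Once that is noted, the corollary follows immediately from the quoted Riordan row-sum formula, and I would present it in two or three lines.
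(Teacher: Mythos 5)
Your proposal is correct and matches the paper's own (implicit) argument exactly: the paper derives this corollary by combining the identification $d(t) = M_0^\ell(\vec{\alpha},\vec{\beta},t)$, $h(t) = t\, M_0^\ell(\vec{\beta},\vec{\beta},t)$ from Corollary \ref{2thm: colored Motzkin triangles as Riordan arrays} with the standard Riordan-array row-sum formula $d(t)/(1-h(t))$. Your added remarks on the validity of the column-sum interchange and the formal geometric series are fine but not points the paper bothers to spell out.
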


Of greater practical importance is the alternative characterization of row-sums presented below, which applies only when $\vec{\alpha} = \vec{\beta}$.  In Section \ref{sec: combinatorial interpretations}, this result will allow us to immediately translate every combinatorial interpretations for $(\vec{\alpha},\vec{\alpha})$-colored Motzkin numbers into an associated combinatorial interpretation for $(\vec{\alpha}+\hat{e}_1,\vec{\alpha})$-colored Motzkin numbers.

\begin{theorem}
\label{2thm: row-sums of Motzkin triangle, color change version}
Fix $\ell \geq 1$ and take any $\ell$-tuple of non-negative integers $\vec{\alpha}$.  If $\hat{e}_1 = (1,0,\hdots,0)$, the generating function for $(\vec{\alpha},\vec{\alpha})$-colored Motzkin paths of length $n$ and any height $m \geq 0$ is

\vspace{-.15in}

$$\sum_{m=0}^\infty M_m^\ell(\vec{\alpha},\vec{\alpha},t) = M_0^\ell (\vec{\alpha}+\hat{e}_1,\vec{\alpha},t).$$
\end{theorem}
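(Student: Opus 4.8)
The plan is to reduce both sides to explicit rational expressions in the single series $h(t)$ attached to these Riordan arrays, using the data recorded in Proposition~\ref{2thm: higher-order Motzkin triangle as Riordan array} together with Corollaries~\ref{2thm: colored Motzkin triangles as Riordan arrays} and \ref{2thm: row-sums of Motzkin triangle}. Write $f(t) = M_0^\ell(\vec\alpha,\vec\alpha,t)$ and $g(t) = M_0^\ell(\vec\alpha+\hat{e}_1,\vec\alpha,t)$ for the two leftmost-column generating functions in play. By Corollary~\ref{2thm: colored Motzkin triangles as Riordan arrays} the $(\vec\alpha,\vec\alpha)$-colored triangle is $\mathcal{R}(f(t),\,t\,f(t))$, so Corollary~\ref{2thm: row-sums of Motzkin triangle} with $\vec\beta=\vec\alpha$ gives the left-hand side immediately as $\sum_{m\ge 0} M_m^\ell(\vec\alpha,\vec\alpha,t) = f(t)/\bigl(1 - t\,f(t)\bigr)$. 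It therefore suffices to establish the single identity $g(t) = f(t)/\bigl(1 - t\,f(t)\bigr)$.

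The key observation is how the pairs $(\vec\alpha,\vec\alpha)$ and $(\vec\alpha+\hat{e}_1,\vec\alpha)$ interact with the $A$- and $Z$-sequences of Proposition~\ref{2thm: higher-order Motzkin triangle as Riordan array}. The $A$-sequence depends only on the second tuple, which is $\vec\alpha$ in both cases; hence the two triangles share the same $A$-sequence and therefore the same series $h(t)$ solving $h = t\,A(h)$. The $Z$-sequence depends only on the first tuple, and since $\vec\alpha+\hat{e}_1$ differs from $\vec\alpha$ only in its $i=0$ entry, the two $Z$-sequences differ by exactly the constant $1$: writing $Z(t)$ for the $Z$-sequence of $(\vec\alpha,\vec\alpha)$, that of $(\vec\alpha+\hat{e}_1,\vec\alpha)$ is $Z(t)+1$.

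I would then feed this into the formula $d(t) = d(0)/\bigl(1 - t\,Z(h(t))\bigr)$ from \eqref{1eq: A and Z sequences vs d(t),h(t)}, with $d(0)=1$ and $d(t)$ identified with the relevant $M_0^\ell$ via Corollary~\ref{2thm: colored Motzkin triangles as Riordan arrays}. Because both arrays share the same $h(t)$, this yields $f = 1/\bigl(1 - t\,Z(h)\bigr)$ and $g = 1/\bigl(1 - t(Z(h)+1)\bigr) = 1/\bigl(1 - t\,Z(h) - t\bigr)$. A one-line manipulation, $f/(1-tf) = 1/\bigl((1 - t\,Z(h)) - t\bigr) = g$, then closes the argument. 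This is essentially the only computation and it is routine; the substantive content lies entirely in the two structural observations above.

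The only place an error could hide is the bookkeeping of which tuple governs which sequence, and in particular the verification that adding $\hat{e}_1$ perturbs the $Z$-sequence by precisely the constant $1$ while leaving $h(t)$ untouched; this is the step I would check most carefully. As intuition and a cross-check, I would also keep in mind the bijective reading of $f/(1-tf) = \sum_{m\ge 0} t^m f^{m+1}$: a $(\vec\alpha,\vec\alpha)$-path of terminal height $m$ decomposes (via Proposition~\ref{2thm: height-m Motzkin paths as height-0 convolutions}) into $m+1$ return-to-base blocks separated by $m$ up-steps, and replacing each separating $U$ step by a level step carrying the new $(\alpha_0+1)$-st color at height $0$ produces exactly a $(\vec\alpha+\hat{e}_1,\vec\alpha)$-path returning to $0$, the extra color marking the former up-steps. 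This map is length-preserving and invertible, and it furnishes an independent combinatorial proof for a reader who prefers to avoid the Riordan formalism.
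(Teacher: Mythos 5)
Your argument is correct, but your primary route is genuinely different from the paper's. The paper proves Theorem \ref{2thm: row-sums of Motzkin triangle, color change version} purely bijectively: given a path of height $m$, it locates the $m$ up steps that are rightmost at their respective heights, replaces them with level steps at height $0$, and colors those with the new color $\alpha_0+1$; these are precisely the ``separating'' $U$ steps of your closing combinatorial sketch, so the cross-check you offer as an afterthought is in fact the paper's entire proof. Your main argument instead works at the level of Riordan-array data, and the two structural observations you flagged for careful checking are both right: by Proposition \ref{2thm: higher-order Motzkin triangle as Riordan array} the $A$-sequence depends only on the second tuple (so both arrays satisfy the same equation $h = t\,A(h)$, whose power-series solution with $h(0)=0$ is unique) and the $Z$-sequence only on the first, with $\hat{e}_1$ shifting it by the constant $1$; since $d(0)=M_{0,0}^\ell=1$, equation \eqref{1eq: A and Z sequences vs d(t),h(t)} gives $f=1/\bigl(1-t\,Z(h)\bigr)$ and $g=1/\bigl(1-t\,Z(h)-t\bigr)$, and the identity $f/(1-tf)=g$ follows, with the left-hand side supplied by Corollaries \ref{2thm: colored Motzkin triangles as Riordan arrays} and \ref{2thm: row-sums of Motzkin triangle} (neither of which depends on the theorem, so there is no circularity). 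What your route buys is brevity and a structural explanation---perturbing the $Z$-sequence by a constant while fixing $h$ is exactly a row-sum operation---which generalizes readily (adding $c$ to $\alpha_0$ corresponds to $c$-weighted row sums). What the paper's bijection buys is a refinement the generating-function identity does not immediately give: an explicit length-preserving correspondence matching paths of height $m$ with paths of height $0$ having exactly $m$ level steps of the new color, which also makes transparent why the statement fails for $\vec{\alpha}\neq\vec{\beta}$.
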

\begin{proof}
We construct a bijection $\phi_n$ from $S = \bigcup_{m=0}^\infty \mathcal{M}_{n,m}^\ell(\vec{\alpha},\vec{\alpha})$ to $\mathcal{M}_{n,0}^\ell(\vec{\alpha}+\hat{e}_1,\vec{\alpha})$, for arbitrary $n \geq 0$.  So take any path $P \in S$, and assume that $P$ has height $m$.  Then $P$ contains precisely $m$ up steps that are ``visible" from the right, meaning that they are the rightmost $U$ steps at their particular height.  Replacing these $U$ steps with (temporarily-uncolored) level steps yields a path with $m$ uncolored $D_0$ steps at height $y=0$.  Coloring these $D_0$ steps with a new color $\alpha_0+1$ results in a unique element $\phi_n(P) \in \mathcal{M}_{n,0}^\ell(\vec{\alpha}+\hat{e}_1,\vec{\alpha})$.

See Figure \ref{2fig: row-sum bijection example} for an example of this map $\phi_n$.  This process is clearly invertible.  As the only $D_0$ steps at height $0$ with the new color $\alpha_0+1$ are those added by $\phi_n$, the inverse map $\phi_n^{-1}$ involves replacing all $D_0$ steps of color $(\alpha_0+1)$ with $U$ steps.
\end{proof}

\begin{figure}[ht!]
\centering
\begin{tikzpicture}
    [scale=.55, auto=left, every node/.style = {circle, fill=black, inner sep=1.25pt}]
    \draw[thick, dotted, color = black] (0,0) to (9,0);
    \draw[thick, dash dot, color = red] (4.5,.5) to (9,.5);
    \draw[thick, dash dot, color = red] (7.5,1.5) to (9,1.5);
    \node(0*) at (0,0) {};
    \node(1*) at (1,1) {};
    \node(2*) at (2,1) {};
    \node(3*) at (3,2) {};
    \node(4*) at (4,0) {};
    \node(5*) at (5,1) {};
    \node(6*) at (6,2) {};
    \node(7*) at (7,1) {};
    \node(8*) at (8,2) {};
    \node(9*) at (9,2) {};
    \draw[thick] (0*) to (1*);
    \draw[thick] (1*) to node[midway,above,fill=none,font=\small] {1} (2*);
    \draw[thick] (2*) to (3*);
    \draw[thick] (3*) to (4*);
    \draw[thick,color=red] (4*) to (5*);
    \draw[thick] (5*) to (6*);
   	\draw[thick] (6*) to node[midway,above,fill=none,font=\small] {1} (7*);
    \draw[thick,color=red] (7*) to (8*);
    \draw[thick] (8*) to node[midway,above,fill=none,font=\small] {1} (9*);
\end{tikzpicture}
\hspace{.3in}
\scalebox{2}{\raisebox{7pt}{$\Leftrightarrow$}}
\hspace{.3in}
\begin{tikzpicture}
    [scale=.55, auto=left, every node/.style = {circle, fill=black, inner sep=1.25pt}]
    \draw[thick, dotted, color = black] (0,0) to (9,0);
    \node(0*) at (0,0) {};
    \node(1*) at (1,1) {};
    \node(2*) at (2,1) {};
    \node(3*) at (3,2) {};
    \node(4*) at (4,0) {};
    \node(5*) at (5,0) {};
    \node(6*) at (6,1) {};
    \node(7*) at (7,0) {};
    \node(8*) at (8,0) {};
    \node(9*) at (9,0) {};
    \draw[thick] (0*) to (1*);
    \draw[thick] (1*) to node[midway,above,fill=none,font=\small] {1} (2*);
    \draw[thick] (2*) to (3*);
    \draw[thick] (3*) to (4*);
    \draw[thick,color=red] (4*) to node[midway,above,fill=none,font=\small] {2} (5*);
    \draw[thick] (5*) to (6*);
   	\draw[thick] (6*) to node[midway,above,fill=none,font=\small] {1} (7*);
    \draw[thick,color=red] (7*) to node[midway,above,fill=none,font=\small] {2} (8*);
    \draw[thick] (8*) to node[midway,above,fill=none,font=\small] {1} (9*);
\end{tikzpicture}
\caption{An example of the bijection from the proof of Theorem \ref{2thm: row-sums of Motzkin triangle, color change version}, here with $\ell=2$, $m=2$, and $\vec{\alpha} = (1,1)$.}
\label{2fig: row-sum bijection example}
\end{figure}

The proof of Theorem \ref{2thm: row-sums of Motzkin triangle, color change version} does not extend to the colorations where $\vec{\alpha} \neq \vec{\beta}$, as the bijection $\phi_n$ may translate $D_i$ steps that end at nonzero height to $D_i$ steps that end at height $0$.  One may still apply Proposition \ref{2thm: height-m Motzkin paths as height-0 convolutions} to Theorem \ref{2thm: row-sums of Motzkin triangle, color change version} to obtain the more general formula below.

\begin{corollary}
\label{2thm: row-sums of Motzkin triangle, color change version corollary}
Fix $\ell \geq 1$ and take any pair of $\ell$-tuples of non-negative integers $\vec{\alpha},\vec{\beta}$.  If $\hat{e}_1 = (1,0,\hdots,0)$, the generating function for $(\vec{\alpha},\vec{\beta})$-colored Motzkin paths of length $n$ and any height $m \geq 0$ is
$$\sum_{m=0}^\infty M_m^\ell(\vec{\alpha},\vec{\beta},t) = M_0^\ell(\vec{\alpha},\vec{\beta},t) + t \kern+2pt M_0^\ell(\vec{\alpha},\vec{\beta},t) \kern+1pt M_0^\ell (\vec{\beta}+ \hat{e}_1,\vec{\beta},t).$$
\end{corollary}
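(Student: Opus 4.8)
The plan is to reduce this statement to Theorem~\ref{2thm: row-sums of Motzkin triangle, color change version} by isolating the $m=0$ term and then rewriting the tail $\sum_{m\geq 1} M_m^\ell(\vec{\alpha},\vec{\beta},t)$ via the convolution identity of Proposition~\ref{2thm: height-m Motzkin paths as height-0 convolutions}. The reason the clean color-change formula of Theorem~\ref{2thm: row-sums of Motzkin triangle, color change version} cannot be applied directly is precisely the caveat noted just before the statement: the bijection $\phi_n$ does not respect an $\vec{\alpha}\neq\vec{\beta}$ coloring. The workaround is to peel off the single ground-level factor carrying the $\vec{\alpha}$ coloring and let a purely $\vec{\beta}$-colored row-sum absorb everything above it.

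Concretely, I would first split off the bottom term, writing
$$\sum_{m=0}^\infty M_m^\ell(\vec{\alpha},\vec{\beta},t) = M_0^\ell(\vec{\alpha},\vec{\beta},t) + \sum_{m=1}^\infty M_m^\ell(\vec{\alpha},\vec{\beta},t),$$
separating the $m=0$ term because Proposition~\ref{2thm: height-m Motzkin paths as height-0 convolutions} applies only for $m\geq 1$. Into the tail I would substitute the first equality of that proposition, $M_m^\ell(\vec{\alpha},\vec{\beta},t) = t\, M_0^\ell(\vec{\alpha},\vec{\beta},t)\, M_{m-1}^\ell(\vec{\beta},\vec{\beta},t)$, and factor the common $t\,M_0^\ell(\vec{\alpha},\vec{\beta},t)$ out of the sum to obtain
$$\sum_{m=1}^\infty M_m^\ell(\vec{\alpha},\vec{\beta},t) = t\, M_0^\ell(\vec{\alpha},\vec{\beta},t) \sum_{m=1}^\infty M_{m-1}^\ell(\vec{\beta},\vec{\beta},t).$$

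Then I would reindex by $k=m-1$, turning the remaining sum into $\sum_{k=0}^\infty M_k^\ell(\vec{\beta},\vec{\beta},t)$, which is exactly the full row-sum generating function for the monochromatic $(\vec{\beta},\vec{\beta})$-colored triangle. Applying Theorem~\ref{2thm: row-sums of Motzkin triangle, color change version} with $\vec{\alpha}$ replaced by $\vec{\beta}$ identifies this sum as $M_0^\ell(\vec{\beta}+\hat{e}_1,\vec{\beta},t)$, so the tail becomes $t\, M_0^\ell(\vec{\alpha},\vec{\beta},t)\, M_0^\ell(\vec{\beta}+\hat{e}_1,\vec{\beta},t)$; adding back the isolated $M_0^\ell(\vec{\alpha},\vec{\beta},t)$ gives the claimed identity verbatim.

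The manipulation is routine formal-power-series bookkeeping, so I do not expect a genuine obstacle. The one point I would flag as the conceptual crux is recognizing that, after the convolution is applied, the entire tail is governed by the monochromatic $(\vec{\beta},\vec{\beta})$ coloring, which is exactly what licenses invoking Theorem~\ref{2thm: row-sums of Motzkin triangle, color change version} with parameter $\vec{\beta}$; the only care required is keeping the index shift $k=m-1$ and the separately handled $m=0$ term straight.
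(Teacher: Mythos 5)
Your proposal is correct and follows exactly the paper's own argument: isolate the $m=0$ term, apply the first equality of Proposition \ref{2thm: height-m Motzkin paths as height-0 convolutions} to the tail, reindex, and invoke Theorem \ref{2thm: row-sums of Motzkin triangle, color change version} with $\vec{\beta}$ in place of $\vec{\alpha}$. No differences worth noting.
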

\begin{proof}
Partially-expanding the summation $\sum_{m=0}^\infty M_m^\ell(\vec{\alpha},\vec{\beta},t)$ and applying Proposition \ref{2thm: height-m Motzkin paths as height-0 convolutions} to every $M_m^\ell(\vec{\alpha},\vec{\beta},t)$ with $m \geq 1$ gives
$$ \sum_{m=0}^\infty M_m^\ell(\vec{\alpha},\vec{\beta},t) = M_0^\ell(\vec{\alpha},\vec{\beta},t) + \sum_{m=1}^\infty M_m^\ell(\vec{\alpha},\vec{\beta},t) = M_0^\ell(\vec{\alpha},\vec{\beta},t) + \sum_{m=1}^\infty t \kern+2pt M_0^\ell(\vec{\alpha},\vec{\beta},t) \kern+1pt M_{m-1}^\ell (\vec{\beta},\vec{\beta},t).$$

Rewriting the expression above and applying Theorem \ref{2thm: row-sums of Motzkin triangle, color change version} to the result gives
$$= M_0^\ell(\vec{\alpha},\vec{\beta},t) + t \kern+2pt M_0^\ell(\vec{\alpha},\vec{\beta},t) \sum_{m=0}^\infty M_{m}^\ell (\vec{\beta},\vec{\beta},t) = M_0^\ell(\vec{\alpha},\vec{\beta},t) + t \kern+2pt M_0^\ell(\vec{\alpha},\vec{\beta},t) M_0^\ell (\vec{\beta}+\hat{e}_1,\vec{\beta},t).$$
\end{proof}

We now turn to results involving standard transforms of $(\vec{\alpha},\vec{\beta})$-colored Motzkin numbers.  Given an integer sequence $\lbrace a_i \rbrace_{i=0}^\infty$, recall that the binomial transform of that sequence is the integer sequence $\lbrace b_i \rbrace_{i=0}^\infty$ that satisfies $b_n = \sum_{i=0}^n \binom{n}{i} a_i$ for all $n \geq 0$.  The following theorem characterizes the binomial transform of an arbitrary column in the $(\vec{\alpha},\vec{\beta})$-colored Motzkin triangle.  It should be noted that the $\ell=1,m=0$ case of this theorem, along with the combinatorial interpretation shown in Table \ref{1tab: Catalan-like numbers}, recovers the well-known result that the binomial transform of the Motzkin numbers is the (shifted) Catalan numbers.

\begin{theorem}
\label{2thm: binomial transform of colored Motzkin numbers}
Fix $\ell \geq 1$ and $m \geq 0$, and take any pair of $\ell$-tuples of non-negative integers $\vec{\alpha},\vec{\beta}$.  If $\hat{e}_1 = (1,0,\hdots,0)$, the binomial transform of the sequence $\lbrace M_{n,m}^\ell(\vec{\alpha},\vec{\beta}) \rbrace_{n=0}^\infty$ is the sequence $\lbrace M_{n,m}^\ell(\vec{\alpha} + \hat{e}_1,\vec{\beta} + \hat{e}_1) \rbrace_{n=0}^\infty$.  Explicitly,

$$\sum_{i=0}^n \binom{n}{i} M_{i,m}^\ell(\vec{\alpha},\vec{\beta}) = M_{n,m}^\ell (\vec{\alpha} + \hat{e}_1,\vec{\beta} + \hat{e}_1).$$
\end{theorem}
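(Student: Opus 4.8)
The plan is to prove the identity bijectively, reading the right-hand side as a count of $(\vec{\alpha}+\hat{e}_1,\vec{\beta}+\hat{e}_1)$-colored paths of length $n$ and sorting them by a single distinguished feature. Observe that passing from $(\vec{\alpha},\vec{\beta})$ to $(\vec{\alpha}+\hat{e}_1,\vec{\beta}+\hat{e}_1)$ changes only the first entry of each tuple; concretely it supplies exactly one additional color for $D_0$ (level) steps at every height, namely color $\alpha_0+1$ at height $0$ and color $\beta_0+1$ at height $>0$. I will call a level step carrying this extra color a \emph{new} step. This mirrors the ``new color $\alpha_0+1$'' device already used in the proof of Theorem \ref{2thm: row-sums of Motzkin triangle, color change version}.

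Fix $i$ with $0\le i\le n$. First I would describe the deletion map $\phi$ sending a path $Q\in\mathcal{M}_{n,m}^\ell(\vec{\alpha}+\hat{e}_1,\vec{\beta}+\hat{e}_1)$ with exactly $n-i$ new steps to a pair $(P,S)$, where $P$ is obtained by deleting all new steps of $Q$ and $S\subseteq\{1,\dots,n\}$ records the positions those new steps occupied. The single crucial observation is that a $D_0$ step is horizontal, so deleting level steps shifts later steps leftward without altering the height of any surviving step. Consequently $P$ runs from $(0,0)$ to $(i,m)$, stays weakly above $y=0$, and every surviving step keeps a color in its original range (a surviving $D_0$ step at height $0$ has color $\le\alpha_0$, one at height $>0$ has color $\le\beta_0$, and all non-level colors are untouched by adding $\hat{e}_1$); hence $P\in\mathcal{M}_{i,m}^\ell(\vec{\alpha},\vec{\beta})$.

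The inverse map reinserts new steps at the positions prescribed by $S$, giving each inserted level step the appropriate new color for the height at which it sits. Because inserting horizontal steps neither lowers the path nor changes existing heights, the result is a legitimate element of $\mathcal{M}_{n,m}^\ell(\vec{\alpha}+\hat{e}_1,\vec{\beta}+\hat{e}_1)$ with exactly $n-i$ new steps, and the two maps are mutually inverse. For fixed $i$ there are $\binom{n}{n-i}=\binom{n}{i}$ choices of $S$ and $M_{i,m}^\ell(\vec{\alpha},\vec{\beta})$ choices of $P$, so summing over $i$ (equivalently over the number $n-i$ of new steps) yields $M_{n,m}^\ell(\vec{\alpha}+\hat{e}_1,\vec{\beta}+\hat{e}_1)=\sum_{i=0}^n\binom{n}{i}M_{i,m}^\ell(\vec{\alpha},\vec{\beta})$, as claimed.

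The main point requiring care—rather than a genuine obstacle—is the bookkeeping that makes $\phi$ and its inverse well defined: one must confirm that ``new'' is unambiguous at every height, that the only new steps in a reconstructed path are exactly the inserted ones (so the fibre over a given $i$ is counted without overlap), and that heights are genuinely preserved under deletion and insertion. As an independent sanity check I would verify the identity at the level of Riordan arrays: by Proposition \ref{2thm: higher-order Motzkin triangle as Riordan array} the map $(\vec{\alpha},\vec{\beta})\mapsto(\vec{\alpha}+\hat{e}_1,\vec{\beta}+\hat{e}_1)$ sends the $A$-sequence $A(t)$ to $A(t)+t$ and the $Z$-sequence $Z(t)$ to $Z(t)+1$, which is precisely the effect of left-multiplication by the Pascal (binomial-transform) array on a proper Riordan array; this furnishes an alternative, purely algebraic route should the combinatorial bookkeeping be deemed insufficiently rigorous.
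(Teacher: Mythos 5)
Your proposal is correct and follows essentially the same route as the paper: partition the $(\vec{\alpha}+\hat{e}_1,\vec{\beta}+\hat{e}_1)$-colored paths by the number of level steps carrying the extra color, delete those steps to land in $\mathcal{M}_{i,m}^\ell(\vec{\alpha},\vec{\beta})$, and count the $\binom{n}{i}$ ways to record their positions. The Riordan-array/Pascal-matrix remark is a valid independent check but is not needed.
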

\begin{proof}
The set $\mathcal{M}_{n,m}^\ell (\vec{\alpha} + \hat{e}_1,\vec{\beta} + \hat{e}_1)$ may be partitioned into the subsets $\bigcup_{i=0}^{n-m} S_i$, where a path $P \in \mathcal{M}_{n,m}^\ell (\vec{\alpha} + \hat{e}_1,\vec{\beta} + \hat{e}_1)$ lies in $S_i$ if and only if it contains precisely $i$ level steps of the final color for its given height ($i$ total $D_0$ steps colored either $\alpha_0+1$ or $\beta_0+1$).  Deleting those $D_0$ steps defines map $\psi_i: S_i \rightarrow \mathcal{M}_{n-i,m}^\ell(\vec{\alpha},\vec{\beta})$ for each $0 \leq i \leq n-m$.  Each map $\psi_i$ is clearly surjective but not injective, with differing locations for the $i$ deleted level steps ensuring that $\binom{n}{i}$ distinct elements of $S_i$ map to each path in $\mathcal{M}_{n-i,m}^\ell(\vec{\alpha},\vec{\beta})$.  It follows that $\vert S_i \vert = \binom{n}{i} M_{n-i,m}^\ell(\vec{\alpha},\vec{\beta})$ for all $0 \leq i \leq n-m$, from which the result follows. 
\end{proof}

\section{Combinatorial Interpretations of $(\vec{\alpha},\vec{\beta})$-Colored\\Motzkin Numbers}
\label{sec: combinatorial interpretations}

For the remainder of this paper, we develop bijections between a variety of well-understood combinatorial objects and collections of $(\vec{\alpha},\vec{\beta})$-colored Motzkin paths.  This will result in a collection of new combinatorial interpretations for $(\vec{\alpha},\vec{\beta})$-colored Motzkin numbers that directly generalize the order $\ell=1$ combinatorial interpretations of Table \ref{1tab: Catalan-like numbers}.

\subsection{$(\vec{\alpha},\vec{\beta})$-Colored Motzkin Numbers and $k$-Dyck Paths}
\label{subsec: k-Dyck paths}

Our first set of combinatorial objects are $k$-Dyck paths, sometimes referred to as $k$-ary paths.  For any $k \geq 2$, a $k$-Dyck path of length $n$ and height $m$ is an integer lattice path from $(0,0)$ to $(n,m)$ that uses the step set $\lbrace U = (1,1), D_{k-1} = (1,1-k) \rbrace$ and remains weakly above $y=0$.  It is obvious that $k$-Dyck paths are in bijection with $(\vec{\alpha},\vec{\beta})$-colored Motzkin paths of order-$(k-1)$ and coloring $\vec{\alpha}=\vec{\beta}=\vec{0}$.  We look for more interesting bijections here.

It can be shown that a point $(x,y)$ may lie on a $k$-Dyck path only if $n = m \kern-4pt \mod \kern-4pt (k)$.  This motivates our choice of dealing only with $k$-Dyck paths of length $kn$ for some $n \geq 0$.  We denote the collection of $k$-Dyck paths of length $kn$ (``semilength" $n$) and height $km$ (``semiheight" $m$) by $\mathcal{D}_{n,m}^k$, and let $\vert \mathcal{D}_{n,m}^k \vert = D_{n,m}^k$.

It is well-known that $k$-Dyck paths of height $0$ are enumerated by the $k$-Catalan numbers (one-parameter Fuss-Catalan numbers) as $D_{n,0}^k = C_n^k = \frac{1}{kn+1}\binom{kn+1}{n}$.  Fixing $k \geq 2$, we define the ordinary generating function $C_k(t) = \sum_{n=0}^\infty C_n^k \kern+1pt t^n$.  It is also well-known that these generating functions satisfy $C_k(t) = 1 + t \kern+1pt C_k(t)^k$ for all $k \geq 2$.  For more information about $k$-Dyck paths and other combinatorial interpretations of the $k$-Catalan numbers, see Hilton and Pedersen \cite{HP} or Heubach, Li and Mansour \cite{HLM}.  

In the order $\ell=1$ case, Table \ref{1tab: Catalan-like numbers} reveals a bijection between $\mathcal{D}_{n,0}^2$ and $(2,2)$-colored Motzkin paths, as well as a bijection between $\mathcal{D}_{n+1,0}^2$ and $(1,2)$-colored Motzkin paths.  These are the bijections that we look to generalize in this subsection.  To do this, begin by observing that $\mathcal{D}_{n+1,0}^2$ is in bijection with ``generalized $2$-Dyck paths" of semilength $n$ that stay weakly above the line $y=-1$, via the map that deletes the initial $U$ step and final $D_1$ step of each $P \in \mathcal{D}_{n+1,0}^2$.

So fix $k \geq 2$ and take any $a \geq 0$.  We define a \textbf{generalized $\mathbf{k}$-Dyck path of depth $\mathbf{a}$}, semilength $n$, and semiheight $m$ to be an integer lattice path from $(0,0)$ to $(kn,km)$ that uses the step set $\lbrace U=(1,1),D_{k-1} = (1,1-k)\rbrace$ and stays weakly above the line $y = -a$.  By analogy with above, we denote the set of all such paths by $\mathcal{D}_{n,m}^{k,a}$ and let $\vert \mathcal{D}_{n,m}^{k,a} \vert = D_{n,m}^{k,a}$.

The sets $\mathcal{D}_{n,m}^{k,a}$ were investigated as ``$k$-Dyck paths with a negative boundary" by Prodinger \cite{Prodinger}, who showed that $D_{n,0}^{k,a} = \frac{a+1}{kn+a+1} \binom{kn+a+1}{n}$ when $0 \leq a \leq k-1$.  Our results will apply over the same range of depths and may be applied to give an alternative derivation of Prodinger's closed formula in terms of proper Riordan arrays.

Now fix $k \geq 2$ and $a \geq 0$, and define $D^{k,a}$ to be the infinite, lower-triangular array of non-negative integers whose $(n,m)$ entry (for $0 \leq m \leq n$) is $D_{n,m}^{k,a}$.  Our approach is to show that these integer triangles represent the same proper Riordan arrays as $(\vec{\alpha},\vec{\beta})$-colored Motzkin triangles for particular choices of $\vec{\alpha},\vec{\beta}$.

\begin{theorem}
\label{3thm: Dyck triangle, A and Z sequences}
For any $k \geq 2$ and $0 \leq a \leq k-1$, $D^{k,a}$ is a proper Riordan array with $A$- and $Z$-sequences
$$A(t) = (1+t)^k, \hspace{.5in} Z(t) = \frac{(1+t)^k - (1+t)^{k-a-1}}{t}.$$
\end{theorem}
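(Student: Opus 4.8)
The plan is to verify directly that the array $D^{k,a}$ satisfies the $A$- and $Z$-sequence recurrences \eqref{1eq: A and Z sequence recursion} associated with the stated $A(t)=(1+t)^k=\sum_r\binom{k}{r}t^r$ and with $Z(t)=\sum_r z_r t^r$, where $z_r=\binom{k}{r+1}-\binom{k-a-1}{r+1}$ (the coefficient of $t^{r+1}$ in $(1+t)^k-(1+t)^{k-a-1}$). Since $D^{k,a}$ is lower-triangular with $D_{0,0}^{k,a}=1$ and since $A(0)=\binom{k}{0}=1\neq 0$ guarantees properness, establishing these two recurrences (with row index $n$, column index $m$) suffices to identify $D^{k,a}$ as the proper Riordan array with these $A$- and $Z$-sequences, by the uniqueness recalled around \eqref{1eq: A and Z sequence recursion}.

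The engine of both recurrences is a ``last block'' decomposition. Because each step changes the height by $+1\pmod{k}$ (an up-step adds $1$, a down-step subtracts $k-1$), a path lands at a height divisible by $k$ after each multiple of $k$ steps; hence the first $k(n-1)$ steps of a semilength-$n$ path form a valid depth-$a$ path of semilength $n-1$ ending at some semiheight $m'\ge 0$. Here the hypothesis $0\le a\le k-1$ is exactly what forces $m'\ge 0$, since the prefix ends at a multiple of $k$ that is $\ge -a>-k$. The remaining block of $k$ steps has $r$ down-steps and $k-r$ up-steps, with net semiheight change $1-r$, so $m'=m-1+r$. Writing $d_j$ for the number of down-steps among the first $j$ block steps, the height after the $j$-th block step is $km'+j-kd_j$; since up-steps only raise the height, the running minimum is attained just after a down-step, equalling $k(m'-i)+p_i$ after the $i$-th down-step, located at position $p_i$.

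For $m\ge 1$ I would observe that this minimum is never an active constraint: after the final down-step the height is $k(m-1)+p_r\ge 0>-a$, so every one of the $\binom{k}{r}$ arrangements of the block stays weakly above $y=-a$. Summing over $r$ gives $D_{n,m}^{k,a}=\sum_r\binom{k}{r}D_{n-1,m-1+r}^{k,a}$, the $A$-sequence recurrence.

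The $m=0$ row is where the boundary becomes active, and this is the main obstacle. Here the block ends at height $0$, so $m'=r-1$ and $r\ge 1$. The crucial computation is that the successive post-down-step minima $k(r-1-i)+p_i$ are \emph{strictly decreasing} in $i$, since consecutive down-positions differ by at most $k-1$ and so $[k(r-1-(i+1))+p_{i+1}]-[k(r-1-i)+p_i]=-k+(p_{i+1}-p_i)\le -1$. Thus the global minimum of the block is $p_r-k$, attained right after the final down-step, and the block is ``bad'' (dips below $y=-a$) precisely when $p_r\le k-a-1$, i.e.\ when all $r$ down-steps occupy the first $k-a-1$ positions. There are exactly $\binom{k-a-1}{r}$ such bad blocks, so the number of valid blocks is $\binom{k}{r}-\binom{k-a-1}{r}$; reindexing by $s=m'=r-1$ turns this into the coefficient $z_s=\binom{k}{s+1}-\binom{k-a-1}{s+1}$ and yields the $Z$-sequence recurrence, completing the identification. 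I expect the strict-monotonicity observation to be the crux: it collapses the boundary count to the single binomial $\binom{k-a-1}{r}$, and it is precisely where $a\le k-1$ is genuinely needed.
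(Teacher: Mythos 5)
Your argument is correct and follows essentially the same route as the paper: decompose a path by its terminal length-$k$ block, obtain the $A$-sequence recurrence with coefficients $\binom{k}{r}$ for $m\ge 1$, and for $m=0$ subtract the $\binom{k-a-1}{r}$ ``bad'' blocks whose down-steps all lie in the first $k-a-1$ positions. The only (cosmetic) difference is how the bad blocks are identified: the paper shows a bad block must end in $U^{a+1}$ by analyzing its rightmost visit to height $-a-1$, while you locate the block's global minimum at $p_r-k$ via the strict decrease of the post-down-step heights --- both yield the same count and the same $Z$-sequence.
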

\begin{proof}
As in the proof of Proposition \ref{2thm: higher-order Motzkin triangle as Riordan array}, it suffices to prove the recurrences

\vspace{.1in}

$D_{n,m}^{k,a} = 
\begin{cases}
\binom{k}{0} D_{n-1,m-1}^{k,a} + \hdots + \binom{k}{k} D_{n-1,m+k-1}^{k,a} & \text{if } m \geq 1; \\[6pt]
\left( \binom{k}{1} - \binom{k-a-1}{1} \right) D_{n-1,0}^{k,a} + \hdots + \left( \binom{k}{k} - \binom{k-a-1}{k} \right) D_{n-1,k-1}^{k,a} & \text{if } m =0.
\end{cases}$

\vspace{.1in}

For any $m \geq 0$, we partition $\mathcal{D}_{n,m}^{k,a}$ into sets $S_{Q_2}$ whereby $P \in \mathcal{D}_{n,m}^{k,a}$ lies in $S_{Q_2}$ if $P$ decomposes as $P = Q_1 Q_2$ for the length-$k$ terminal subpath $Q_2$.  For $P = Q_1 Q_2$, observe that $Q_1 \in \mathcal{D}_{n-1,m-1+j}^{k,a}$ if $Q_2$ contains precisely $j$ down steps.  This implies that $\vert S_{Q_2} \vert = D_{n-1,m-1+j}^{k,a}$ for every valid choice of $Q_2$, via the bijection that takes $P = Q_1 Q_2$ to $Q_1$.

All that's left is to enumerate length-$k$ subpaths $Q_2$ that end with semiheight $m \geq 0$, contain precisely $0 \leq j \leq k$ down steps, and remain weakly above $y=-a$.  When $m \geq 1$, it is impossible for such a subpath (for any $j$) to go below $y=0$ and still end at height $(k-1)m$.  This would require a full complement of $k$ up steps to travel from $y=-1$ to a height of at least $(k-1)$, and we're assuming that $Q_2$ began with non-negative height.  It follows that there are $\binom{k}{j}$ valid choices of $Q_2$ with precisely $j$ down steps when $m \geq 1$, giving the first line of our desired recurrence.

When $m = 0$, not all $\binom{k}{j}$ potential subpaths $Q_2$ will remain weakly above $y=-a$.  We enumerate the ``bad" length-$k$ subpaths that that go below $y=-a$.  Every such ``bad" subpath $Q_2$ has a rightmost step $p$ that terminates at height $-a-1$.  We claim that $p$ may only be followed by up steps.  This is because, if $p$ were followed by any $D_{k-1}$ steps, then $p$ would also need to be followed by at least $(k-1)+(a+1) \geq k$ up steps if we want $Q_2$ to end at height $0$.  It follows that ``bad" subpaths $Q_2$ must decompose as $Q_2 = Q_3 \kern+1pt p \kern+1pt U^{a+1}$, with the $j$ down steps of $Q_2$ being distributed among the $k-a-1$ steps of the sub-subpath $Q_3 \kern+1pt p$.  It follows that there are precisely $\binom{k-a-1}{j}$ ``bad" choices of $Q_2$ with precisely $j$ down steps, and thus that there are precisely $\binom{k}{j} - \binom{k-a-1}{j}$ valid choices of $Q_2$ with precisely $j$ down steps.  This gives the second line of our desired recurrence.
\end{proof}

\begin{corollary}
\label{3thm: Dyck paths vs colored Motzkin paths}
Fix $k \geq 2$.  For all $n \geq 0$, $0 \leq m \leq n$, and $0 \leq a \leq k-1$, the equality $D_{n,m}^{k,a} = M_{n,m}^{k-1}(\vec{\alpha},\vec{\beta})$ holds for $(k-1)$-tuples $\vec{\alpha} = (\alpha_0,\hdots,\alpha_{k-2})$ and $\vec{\beta}=(\beta_0,\hdots,\beta_{k-2})$ with $\alpha_i = \binom{k}{i+1} - \binom{k-a-1}{i+1}$ and $\beta_i = \binom{k}{i+1}$ for all $0 \leq i \leq k-2$.
\end{corollary}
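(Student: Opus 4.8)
The plan is to invoke the uniqueness of a proper Riordan array's $A$- and $Z$-sequences. By the recursion \eqref{1eq: A and Z sequence recursion}, once the seed entry $d_{0,0} = d(0)$ is fixed the $A$- and $Z$-sequences generate every remaining entry of a proper Riordan array, so two proper Riordan arrays with the same top-left entry coincide entry-by-entry precisely when their $A$- and $Z$-sequences agree. Both the Dyck triangle $D^{k,a}$ and the order-$(k-1)$ colored Motzkin triangle have top-left entry $1$ (each counts a single empty path), and Theorem \ref{3thm: Dyck triangle, A and Z sequences} together with Proposition \ref{2thm: higher-order Motzkin triangle as Riordan array} (applied with $\ell = k-1$) exhibit both as proper Riordan arrays. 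It therefore suffices to verify that, for the prescribed $\vec{\alpha},\vec{\beta}$, the two pairs of $A$- and $Z$-sequences are identical; the claimed equality $D_{n,m}^{k,a} = M_{n,m}^{k-1}(\vec{\alpha},\vec{\beta})$ then follows for all $n$ and $m$.

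First I would match the $A$-sequences. Proposition \ref{2thm: higher-order Motzkin triangle as Riordan array} gives $A(t) = 1 + \beta_0 t + \cdots + \beta_{k-2} t^{k-1} + t^k$ for the order-$(k-1)$ triangle, while Theorem \ref{3thm: Dyck triangle, A and Z sequences} gives $A(t) = (1+t)^k = \sum_{j=0}^{k} \binom{k}{j} t^j$. Comparing coefficients of $t^j$ shows the constant terms agree (both equal $1$), the leading terms agree (both equal $\binom{k}{k} = 1$), and the interior coefficients force $\beta_{j-1} = \binom{k}{j}$ for $1 \le j \le k-1$, that is $\beta_i = \binom{k}{i+1}$ for $0 \le i \le k-2$, exactly as stated.

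Next I would match the $Z$-sequences, which is where the only genuine bookkeeping lies. Expanding the numerator of the $Z$-sequence from Theorem \ref{3thm: Dyck triangle, A and Z sequences} gives $(1+t)^k - (1+t)^{k-a-1} = \sum_{j=0}^{k} \bigl( \binom{k}{j} - \binom{k-a-1}{j} \bigr) t^j$, where $\binom{k-a-1}{j} = 0$ for $j > k-a-1$. The $j=0$ terms cancel, so the difference is divisible by $t$, and dividing yields $Z(t) = \sum_{i=0}^{k-1} \bigl( \binom{k}{i+1} - \binom{k-a-1}{i+1} \bigr) t^i$. Comparing with the Motzkin $Z$-sequence $\alpha_0 + \cdots + \alpha_{k-2} t^{k-2} + t^{k-1}$ gives $\alpha_i = \binom{k}{i+1} - \binom{k-a-1}{i+1}$ for $0 \le i \le k-2$, again as claimed, and one final check confirms the top coefficient: at $i = k-1$ one has $\binom{k}{k} - \binom{k-a-1}{k} = 1 - 0 = 1$, since $a \ge 0$ forces $k-a-1 < k$. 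With both sequence pairs matched, the two proper Riordan arrays are equal and the corollary follows.

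The main obstacle is conceptual rather than computational: the key move is recognizing that the whole statement collapses to the uniqueness of the $(A,Z)$ data, after which nothing remains but coefficient comparison. The only place demanding care is the off-by-one indexing between the Motzkin parameters (with $\beta_i$ attached to $t^{i+1}$ and $\alpha_i$ to $t^i$) and the binomial coefficients, together with the edge-case verifications at the top and bottom of each sequence.
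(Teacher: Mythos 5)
Your proposal is correct and is exactly the paper's argument: the paper's proof of this corollary is the one-line "follows directly from a comparison of Proposition \ref{2thm: higher-order Motzkin triangle as Riordan array} and Theorem \ref{3thm: Dyck triangle, A and Z sequences}," and you have simply filled in the coefficient-matching and the uniqueness-of-$(A,Z)$ justification that the authors leave implicit. All of your bookkeeping (the index shift $\beta_i = \binom{k}{i+1}$, the cancellation of the $j=0$ term before dividing by $t$, and the top-coefficient check $\binom{k}{k}-\binom{k-a-1}{k}=1$) checks out.
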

\begin{proof}
This follows directly from a comparison of Proposition \ref{2thm: higher-order Motzkin triangle as Riordan array} and Theorem \ref{3thm: Dyck triangle, A and Z sequences}.
\end{proof}

Note the order shift of Corollary \ref{3thm: Dyck paths vs colored Motzkin paths}: generalized $k$-Dyck paths correspond to order-$(k-1)$ colored Motzkin paths.  Also notice this represents a bijection between generalized Dyck paths of length $kn$ and colored Motzkin paths of length $n$.  The reason that Theorem \ref{3thm: Dyck triangle, A and Z sequences} and Corollary \ref{3thm: Dyck paths vs colored Motzkin paths} fail to generalize to $a \geq k$ is because generalized Dyck paths of those depths may have negative semidepth $m$, making it is impossible to arrange the cardinalities $D_{n,m}^{k,a}$ into a proper Riordan array.

For $k=2,3,4$, the $(k-1)$-tuples $(\vec{\alpha},\vec{\beta})$ that result from Corollary \ref{3thm: Dyck paths vs colored Motzkin paths} are summarized in Table \ref{3tab: k-Dyck paths}.  See Appendix \ref{sec: appendix} for how these colorations fit within the broader scheme of $(\vec{\alpha},\vec{\beta})$-colored Motzkin paths.

\begin{table}[ht!]
\centering
\begin{tabular}{|c|c|c|c|c|}
\hline
$\vec{\alpha},\vec{\beta}$ & $\boldsymbol{a=0}$ & $\boldsymbol{a=1}$ & $\boldsymbol{a=2}$ & $\boldsymbol{a=3}$ \\ \hline
$\boldsymbol{k=2}$ & $(1),(2)$ & $(2),(2)$ & - & -\\ \hline
$\boldsymbol{k=3}$ & $(1,2), (3,3)$ & $(2,3), (3,3)$ & $(3,3), (3,3)$ & -\\ \hline
$\boldsymbol{k=4}$ & $(1,3,3),(4,6,4)$ & $(2,5,4),(4,6,4)$ & $(3,6,4),(4,6,4)$ & $(4,6,4),(4,6,4)$\\ \hline
\end{tabular}
\caption{$(k-1)$-tuples $\vec{\alpha},\vec{\beta}$ such that $D_{n,m}^{k,a} = M_{n,m}^{k-1}(\vec{\alpha},\vec{\beta})$, as proven in Corollary \ref{3thm: Dyck paths vs colored Motzkin paths}.}
\label{3tab: k-Dyck paths}
\end{table}

\vspace{.1in}

As it will be helpful in upcoming subsections, we briefly outline one explicit bijection between $\mathcal{D}_{n,m}^{k,a}$ and $\mathcal{M}_{n,m}^{k-1}(\vec{\alpha},\vec{\beta})$ for the tuples $\vec{\alpha},\vec{\beta}$ of Corollary \ref{3thm: Dyck paths vs colored Motzkin paths}.  So take $P \in \mathcal{M}_{n,m}^{k-1}(\vec{\alpha},\vec{\beta})$, and take any step $p$ of $P$ that begins at height $y_1$ and ends at height $y_2$.  We replace $p$ with a length-$k$ subpath with step set $\lbrace U,D_{k-1} \rbrace$ that begins at height $k y_1$ and ends at height $k y_2$.  If $p$ is a $U$ step, the only way to do this is with a subpath of $k$ consecutive $U$ steps.  If $p$ is a $D_i$ step, the new subpath must contain precisely $i+1$ total $D_{k-1}$ steps and $k-i-1$ total $U$ steps.  There are $\binom{k}{i+1}$ such length-$k$ subpaths: the specific subpath chosen is determined by the coloring of the $D_i$ step being replaced.  All such length-$k$ subpaths stay above $y=0$ (and hence above $y=-a$) when $y_2 \geq 1$, whereas some subpaths may go below $y=-a$ when $y_2 = 0$.  The colorations $(\vec{\alpha},\vec{\beta})$ of Corollary \ref{3thm: Dyck paths vs colored Motzkin paths} provide the number of valid length-$k$ subpaths.  See Figure \ref{3fig: Motzkin to k-Dyck bijection example} for an example of this bijection.

\begin{figure}[ht!]
\centering
\begin{tikzpicture}
    [scale=.55, auto=left, every node/.style = {circle, fill=black, inner sep=1.25pt}]
    \draw[thick, dotted, color = black] (0,0) to (6,0);
    \node(0*) at (0,0) {};
    \node(1*) at (1,0) {};
    \node(2*) at (2,1) {};
    \node(3*) at (3,1) {};
    \node(4*) at (4,1) {};
    \node(5*) at (5,0) {};
    \node(6*) at (6,0) {};
    \draw[thick] (0*) to node[midway,above,fill=none,font=\small] {1} (1*);
    \draw[thick] (1*) to (2*);
    \draw[thick] (2*) to node[midway,above,fill=none,font=\small] {2} (3*);
    \draw[thick] (3*) to node[midway,above,fill=none,font=\small] {1} (4*);
    \draw[thick] (4*) to (5*);
    \draw[thick] (5*) to node[midway,above,fill=none,font=\small] {2} (6*);
\end{tikzpicture}
\hspace{.3in}
\scalebox{2.5}{\raisebox{4pt}{$\Leftrightarrow$}}
\hspace{.3in}
\raisebox{-10pt}{
\begin{tikzpicture}
    [scale=.4, auto=left, every node/.style = {circle, fill=black, inner sep=1.1pt}]
    \draw[thick, dotted, color = black] (0,0) to (12,0);
    \node(0*) at (0,0) {};
    \node(1*) at (1,1) {};
    \node(2*) at (2,0) {};
    \node(3*) at (3,1) {};
    \node(4*) at (4,2) {};
    \node(5*) at (5,1) {};
    \node(6*) at (6,2) {};
    \node(7*) at (7,3) {};
    \node(8*) at (8,2) {};
    \node(9*) at (9,1) {};
    \node(10*) at (10,0) {};
    \node(11*) at (11,-1) {};
    \node(12*) at (12,0) {};
    \draw[thick] (0*) to (1*);
    \draw[thick] (1*) to (2*);
    \draw[thick] (2*) to (3*);
    \draw[thick] (3*) to (4*);
    \draw[thick] (4*) to (5*);
    \draw[thick] (5*) to (6*);
   	\draw[thick] (6*) to (7*);
    \draw[thick] (7*) to (8*);
    \draw[thick] (8*) to (9*);
    \draw[thick] (9*) to (10*);
    \draw[thick] (10*) to (11*);
    \draw[thick] (11*) to (12*);
\end{tikzpicture}}
\caption{An example of our bijection between $\mathcal{M}_{n,m}^{k-1}(\vec{\alpha},\vec{\beta})$ and $\mathcal{D}_{n,m}^{k,a}$ for $k=2$ and $a=1$.  Here, $U$ steps in the Motzkin path are replaced by a $UU$ subpath in the $2$-Dyck , $D_1$ steps are replaced by $DD$, $D_0$ steps of color $1$ are replaced by $UD$, and $D_0$ steps of color $2$ are replaced by $DU$.  Notice that forbidding $D_0$ steps of color $2$ at height $y=0$ prevents the resulting $2$-Dyck path from dropping below $y=0$.}
\label{3fig: Motzkin to k-Dyck bijection example}
\end{figure}

Theorem \ref{3thm: Dyck triangle, A and Z sequences} may be used to find the generating functions $d(t),h(t)$ of the proper Riordan array $\mathcal{R}(d(t),h(t))$ with entries $D_{n,m}^{k,a}$.  As seen in Corollary \ref{3thm: Dyck path triangles as Fuss-Catalan triangles}, these Riordan arrays are ``Fuss-Catalan triangles" of the type introduced by He and Shapiro \cite{HS} and further examined by Drube \cite{Drube}.

\begin{corollary}
\label{3thm: Dyck path triangles as Fuss-Catalan triangles}
For any $k \geq 2$ and $0 \leq a \leq k-1$, $D^{k,a}$ is the proper Riordan array $\mathcal{R}(d(t),h(t))$ with $d(t) = C_k(t)^{a+1}$ and $h(t) = t \kern+1pt C_k(t)^k$.
\end{corollary}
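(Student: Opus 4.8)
The plan is to feed the $A$- and $Z$-sequences supplied by Theorem~\ref{3thm: Dyck triangle, A and Z sequences} into the fundamental relations \eqref{1eq: A and Z sequences vs d(t),h(t)}, which express $h(t)$ and $d(t)$ directly in terms of $A(t)$, $Z(t)$, and $d(0)$. Since Theorem~\ref{3thm: Dyck triangle, A and Z sequences} already establishes that $D^{k,a}$ is a proper Riordan array, it suffices to identify its $d(t)$ and $h(t)$; I would simply verify that the claimed closed forms satisfy these relations, repeatedly invoking the defining functional equation $C_k(t) = 1 + t\,C_k(t)^k$.

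First I would pin down $h(t)$. With $A(t) = (1+t)^k$, the relation $h(t) = t\,A(h(t))$ becomes $h(t) = t\,(1+h(t))^k$. I would check that $h(t) = t\,C_k(t)^k$ is a solution: the Fuss-Catalan functional equation gives $1 + h(t) = 1 + t\,C_k(t)^k = C_k(t)$, so that $t\,(1+h(t))^k = t\,C_k(t)^k = h(t)$. Because $h(0) = 0$ and $h'(0) \neq 0$, this is the unique admissible solution, establishing $h(t) = t\,C_k(t)^k$.

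Next I would compute $d(t) = d(0)/\bigl(1 - t\,Z(h(t))\bigr)$, noting $d(0) = D_{0,0}^{k,a} = 1$ since the only path of semilength $0$ is empty. Substituting $h(t) = t\,C_k(t)^k$, equivalently $1 + h(t) = C_k(t)$, into $Z(t) = \bigl((1+t)^k - (1+t)^{k-a-1}\bigr)/t$ and using $t/h(t) = 1/C_k(t)^k = 1/(1+h(t))^k$ yields
$$t\,Z(h(t)) = \frac{(1+h(t))^k - (1+h(t))^{k-a-1}}{(1+h(t))^k} = 1 - C_k(t)^{-(a+1)},$$
so that $1 - t\,Z(h(t)) = C_k(t)^{-(a+1)}$ and hence $d(t) = C_k(t)^{a+1}$, completing the identification.

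The computation is short, so there is no serious obstacle; the only points requiring care are the appeal to $C_k = 1 + t\,C_k^k$ to collapse $1 + h(t)$ to $C_k(t)$, and the cancellation of the hidden factor $h(t) = t\,C_k(t)^k$ against the leading $t$ of $t\,Z(h(t))$, which is precisely what makes the $1/t$ appearing in $Z(t)$ disappear cleanly and leaves a pure power of $C_k(t)$.
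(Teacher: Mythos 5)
Your proposal is correct and follows essentially the same route as the paper: verify the claimed $d(t)$ and $h(t)$ against the relations \eqref{1eq: A and Z sequences vs d(t),h(t)} using the $A$- and $Z$-sequences from Theorem~\ref{3thm: Dyck triangle, A and Z sequences} together with the functional equation $C_k(t) = 1 + t\,C_k(t)^k$. Your brief remarks on uniqueness of $h$ and on $d(0)=1$ are minor additions the paper leaves implicit, but the computation is identical.
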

\begin{proof}
Given $A(t)$ and $Z(t)$ from Theorem \ref{3thm: Dyck triangle, A and Z sequences}, we merely need to verify the identities of \eqref{1eq: A and Z sequences vs d(t),h(t)}.  Using the $k$-Catalan identity $C_k(t) = 1 + t \kern+1pt C_k(t)^k$, we have

$$t \kern+1pt A(h(t)) = t \kern+1pt (1 + t \kern+1pt C_k(t)^k)^k = t \kern+1pt C_k(t)^k = h(t), \text{ and}$$

$$\frac{d(0)}{1 - t \kern+1pt Z(h(t))} = \frac{1}{1 - t \left( \frac{(1+t \kern+1pt C_k(t)^k)^k - (1 + t \kern+1pt C_k(t)^k)^{k-a-1}}{t \kern+1pt C_k(t)^k}\right)} = \frac{1}{1- \frac{C_k(t)^k - C_k(t)^{k-a-1}}{C_k(t)^k}}$$
$$= \frac{1}{\frac{C_k(t)^k - C_k(t)^k + C_k(t)^{k-a-1}}{C_k(t)^k}} = \frac{C_k(t)^k}{C_k(t)^{k-a-1}} = C_k(t)^{a+1} = d(t).$$
\end{proof}

For one final result involving $k$-Dyck paths, notice that the $a=k-1$ case of Corollary \ref{3thm: Dyck paths vs colored Motzkin paths} places $\mathcal{D}_{n,m}^{k,k-1}$ in bijection with $\mathcal{M}_{n,m}^{k-1}(\vec{\alpha},\vec{\beta})$ for which $\vec{\alpha}=\vec{\beta}$.  We may then apply Theorem \ref{2thm: row-sums of Motzkin triangle, color change version} to enumerate generalized $k$-Dyck paths of fixed length and any semiheight.

\begin{corollary}
\label{3thm: Dyck paths row sums}
Fix $k \geq 2$.  For all $n \geq 0$, the equality $\sum_{m=0}^n D_{n,m}^{k,k-1} = M_{n,m}^{k-1}(\vec{\alpha},\vec{\alpha})$ holds for the $(k-1)$-tuple $\vec{\alpha} = (\alpha_0,\hdots,\alpha_{k-2})$ with $\alpha_0 = k+1$ and $\alpha_i = \binom{k}{i+1}$ for all $1 \leq i \leq k-2$. 
\end{corollary}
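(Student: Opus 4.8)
The plan is to obtain this as a direct composition of Corollary \ref{3thm: Dyck paths vs colored Motzkin paths} (specialized to the depth $a=k-1$) with Theorem \ref{2thm: row-sums of Motzkin triangle, color change version}; no new bijection should be required. First I would specialize the tuples of Corollary \ref{3thm: Dyck paths vs colored Motzkin paths} to $a=k-1$. Here $k-a-1 = 0$, so the subtracted binomial in each entry becomes $\binom{0}{i+1}$, which vanishes for every $0 \leq i \leq k-2$. Consequently both colorings collapse to a single tuple: writing $\vec{\gamma} = (\gamma_0,\hdots,\gamma_{k-2})$ with $\gamma_i = \binom{k}{i+1}$, Corollary \ref{3thm: Dyck paths vs colored Motzkin paths} yields the entrywise identity $D_{n,m}^{k,k-1} = M_{n,m}^{k-1}(\vec{\gamma},\vec{\gamma})$ for all $n \geq 0$ and $0 \leq m \leq n$. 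This is exactly the observation, recorded just before the statement, that the depth $a=k-1$ lands inside the family $\vec{\alpha}=\vec{\beta}$ to which Theorem \ref{2thm: row-sums of Motzkin triangle, color change version} applies.

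Next I would sum this identity over $m$ and pass to generating functions. Because $M_{n,m}^{k-1}(\vec{\gamma},\vec{\gamma})=0$ whenever $m>n$, the finite sum $\sum_{m=0}^{n} D_{n,m}^{k,k-1}$ equals the coefficient $[t^n]\sum_{m=0}^{\infty} M_m^{k-1}(\vec{\gamma},\vec{\gamma},t)$, so the upper limit may be harmlessly extended to infinity. Applying Theorem \ref{2thm: row-sums of Motzkin triangle, color change version} with its ``$\vec{\alpha}$'' taken to be $\vec{\gamma}$ evaluates this row-sum generating function as $M_0^{k-1}(\vec{\gamma}+\hat{e}_1,\vec{\gamma},t)$. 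Extracting the coefficient of $t^n$ then gives $\sum_{m=0}^{n} D_{n,m}^{k,k-1} = M_{n,0}^{k-1}(\vec{\gamma}+\hat{e}_1,\vec{\gamma})$.

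Finally I would reconcile $\vec{\gamma}+\hat{e}_1$ with the tuple $\vec{\alpha}$ named in the statement. Adding $\hat{e}_1=(1,0,\hdots,0)$ to $\vec{\gamma}$ only increments the zeroth coordinate, producing $\gamma_0 + 1 = \binom{k}{1}+1 = k+1$ in position $0$ while leaving $\gamma_i = \binom{k}{i+1}$ for $1 \leq i \leq k-2$; this is precisely the $\vec{\alpha}$ of the statement. Reading the right-hand side accordingly as the $m=0$ column $M_{n,0}^{k-1}(\vec{\alpha},\vec{\alpha}-\hat{e}_1)$, with the positive-height coloring still equal to $\vec{\gamma}=\vec{\alpha}-\hat{e}_1$, completes the chain.

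I anticipate no deep obstacle, since the content lies entirely in correctly linking the two cited results. The step that most warrants care is the bookkeeping of the first paragraph: verifying that $k-a-1=0$ and $\binom{0}{i+1}=0$ across the full index range $0 \leq i \leq k-2$, so that the two colorings genuinely coincide. The second delicate point is tracking which coloring receives the $\hat{e}_1$ shift, as Theorem \ref{2thm: row-sums of Motzkin triangle, color change version} increments only the depth-zero coloring while leaving the positive-height coloring equal to $\vec{\gamma}$; keeping these straight is what pins down both the shift and the correct second argument on the right-hand side.
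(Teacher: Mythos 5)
Your derivation is correct and follows exactly the route the paper intends: specialize Corollary \ref{3thm: Dyck paths vs colored Motzkin paths} to $a=k-1$, where $\binom{0}{i+1}=0$ collapses both colorings to $\vec{\gamma}$ with $\gamma_i=\binom{k}{i+1}$, and then apply Theorem \ref{2thm: row-sums of Motzkin triangle, color change version}. Be aware, however, that what you actually prove is $\sum_{m=0}^{n} D_{n,m}^{k,k-1} = M_{n,0}^{k-1}(\vec{\gamma}+\hat{e}_1,\vec{\gamma})$, and your closing ``reconciliation'' is really a correction of the statement: as printed, the corollary puts $\vec{\alpha}$ (with $\alpha_0=k+1$) in \emph{both} coloring slots and carries a stray subscript $m$ on the right-hand side, neither of which is what Theorem \ref{2thm: row-sums of Motzkin triangle, color change version} delivers. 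The $k=2$ case confirms your version over the printed one: the row sums of the $(2,2)$-triangle are $\binom{2n+1}{n+1} = M_{n,0}^{1}(3,2)$ (Tables \ref{appendix ell=1} and \ref{appendix ell=1 row sums}), whereas $M_{n,0}^{1}(3,3)$ is A002212, a different sequence. So trust your computation; the right-hand side should read $M_{n,0}^{k-1}(\vec{\alpha},\vec{\alpha}-\hat{e}_1)$, with only the height-zero coloring incremented.
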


\subsection{$(\vec{\alpha},\vec{\beta})$-Colored Motzkin Numbers and $(k,r)$-Fine Paths}
\label{subsec: k-Fine paths}

Our second set of combinatorial objects are subsets of $k$-Dyck paths that we refer to as $(k,r)$-Fine paths.  These are an entirely new notion that intuitively generalize the concept of Fine paths from $k=2$ to all $k \geq 2$.

A Fine path of length $n$ and height $m$ is an element of $D_{n,m}^2$ that lacks a subpath of the form $U D_1$ ending at height $y=0$.  Forbidden subpaths of this type are called ``hills", meaning that Fine path are $2$-Dyck paths that lack hills.  It is well-known that Fine paths of height $0$ are enumerated by the Fine numbers $\lbrace F_n \rbrace_{n=0}^\infty$, an integer sequence that begins $1,0,1,2,6,18,\hdots$.  The Fine numbers have an ordinary generating function $F(t)$ that satisfies $F(t) = \frac{1}{1 - t^2 C_2(t)^2} = \frac{C_2(t)}{1+ t \kern+1pt C_2(t)}$.  For more results about Fine paths and the Fine numbers, see Deutsch and Shapiro \cite{DS}.

The notion of a ``hill" becomes more ambiguous when you generalize from $2$-Dyck paths to $k$-Dyck paths when $k > 2$.  For fixed $k \geq 2$, we identify $k-1$ competing definitions for a Fuss-Catalan analogue of Fine paths, each of which forbids different classes of subpaths that end at height $0$.  So fix $k \geq 2$, and take any $1 \leq r \leq k-1$.  A \textbf{$\mathbf{(k,r)}$-Fine path} of semilength $n$ and semiheight $m$ is an element of $\mathcal{D}_{n,m}^k$ that lacks a subpath of the form $U^r D_{k-1}$ that ends at height $0$.  Clearly, $(k,r_1)$-Fine paths are a subset of $(k,r_2)$-Fine paths for all $r_1 < r_2$.  See Figure \ref{3fig: generalized Fine path example} for a simple example.

\begin{figure}[ht!]
\centering
\begin{tikzpicture}
	[scale=0.45, auto=left, every node/.style = {circle, fill=black, inner sep=1.25pt}]
	\node(0*) at (0,0) {};
	\node(1*) at (.8,1) {};
	\node(2*) at (1.6,2) {};
	\node(3*) at (2.4,3) {};
	\node(4*) at (3.2,4) {};
	\node(5*) at (4,2) {};
	\node(6*) at (4.8,0) {};
	\draw[dotted, color = black] (0*) to (6*);
	\draw[thick,black] (0*) to (1*);
	\draw[thick,black] (1*) to (2*);
	\draw[thick,black] (2*) to (3*);
	\draw[thick,black] (3*) to (4*);
	\draw[thick,black] (4*) to (5*);
	\draw[thick,black] (5*) to (6*);
\end{tikzpicture}
\hspace{.5in}
\begin{tikzpicture}
	[scale=0.45, auto=left, every node/.style = {circle, fill=black, inner sep=1.25pt}]
	\node(0*) at (0,0) {};
	\node(1*) at (.8,1) {};
	\node(2*) at (1.6,2) {};
	\node(3*) at (2.4,3) {};
	\node(4*) at (3.2,1) {};
	\node(5*)[color=red] at (4,2) {};
	\node(6*)[color=red] at (4.8,0) {};
	\draw[dotted, color = black] (0*) to (6*);
	\draw[thick,black] (0*) to (1*);
	\draw[thick,black] (1*) to (2*);
	\draw[thick,black] (2*) to (3*);
	\draw[thick,black] (3*) to (4*);
	\draw[thick,red] (4*) to (5*);
	\draw[thick,red] (5*) to (6*);
\end{tikzpicture}
\hspace{0.5in}
\begin{tikzpicture}
	[scale=0.45, auto=left, every node/.style = {circle, fill=red, inner sep=1.25pt}]
	\node(0*) at (0,0) {};
	\node(1*) at (.8,1) {};
	\node(2*) at (1.6,2) {};
	\node(3*) at (2.4,0) {};
	\node(4*) at (3.2,1) {};
	\node(5*) at (4,2) {};
	\node(6*) at (4.8,0) {};
	\draw[dotted, color = black] (0*) to (6*);
	\draw[thick,red] (0*) to (1*);
	\draw[thick,red] (1*) to (2*);
	\draw[thick,red] (2*) to (3*);
	\draw[thick,red] (3*) to (4*);
	\draw[thick,red] (4*) to (5*);
	\draw[thick,red] (5*) to (6*);
\end{tikzpicture}
\caption{A trio of $3$-Dyck paths, the first of which is both $(3,2)$-Fine and $(3,1)$-Fine, the second of which is $(3,2)$-Fine but not $(3,1)$-Fine, and the third of which is neither $(3,2)$-Fine nor $(3,1)$-Fine.  Generalized ``hills" are shown in red.}
\label{3fig: generalized Fine path example}
\end{figure}
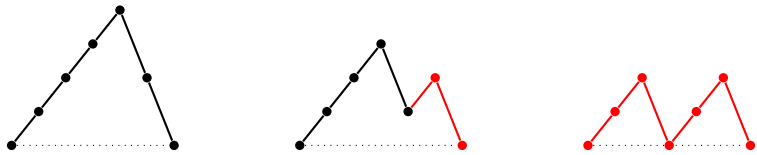

As in Subsection \ref{subsec: k-Dyck paths}, we further generalize the notion of $(k,r)$-Fine paths to paths that stay weakly above $y = -a$ for any $0 \leq a \leq k-1$.  We define a \textbf{generalized $\mathbf{(k,r)}$-Fine path of depth $\textbf{a}$}, semilength $n$, and semiheight $m$ to be an element of $\mathcal{D}_{n,m}^{k,a}$ lacking a subpath of the form $U^r D_{k-1}$ that ends at height $0$.  We denote the set of all such paths $\mathcal{F}_{n,m}^{k,a,r}$ and let $\vert \mathcal{F}_{n,m}^{k,a,r} \vert = F_{n,m}^{k,a,r}$.  We refer to the sequences $\lbrace F_{n,0}^{k,0,r} \rbrace_{n=0}^\infty$ as the \textbf{$\mathbf{(k,r)}$-Fine numbers}.

Also mirroring Subsection \ref{subsec: k-Dyck paths}, for any $k \geq 2$, $a \geq 0$, and $1 \leq r \leq k-1$ we define $F^{k,a,r}$ to be the infinite, lower-triangular array whose $(n,m)$ entry (for $0 \leq m \leq n)$ is $F_{n,m}^{k,a,r}$.  Our approach is once again to identify the proper Riordan array associated with each triangle $F^{k,a,r}$ and compare the results to Proposition \ref{2thm: higher-order Motzkin triangle as Riordan array}.

\begin{theorem}
\label{3thm: Fine triangle, A and Z sequences}
For any $k \geq 2$, $0 \leq a \leq k-1$, and $1 \leq r \leq k-1$, $F^{k,a,r}$ is a proper Riordan array with $A$- and $Z$-sequences
$$A(t) = (1+t)^k, \hspace{.5in} Z(t) = \frac{(1+t)^k - (1+t)^{k-a-1}}{t} - (1+t)^{k-r-1}.$$
\end{theorem}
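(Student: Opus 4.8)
The plan is to follow the template of the proof of Theorem~\ref{3thm: Dyck triangle, A and Z sequences} and establish the $A$- and $Z$-sequence recurrences directly, reading off $A(t)$ and $Z(t)$ at the end exactly as was done for $D^{k,a}$. As in that proof, I would partition $\mathcal{F}_{n,m}^{k,a,r}$ by the terminal length-$k$ subpath $Q_2$ in the decomposition $P = Q_1 Q_2$, so that $Q_1 \in \mathcal{F}_{n-1,m-1+j}^{k,a,r}$ whenever $Q_2$ contains exactly $j$ down steps. The whole argument then reduces to counting, for each $j$, the length-$k$ subpaths $Q_2$ with $j$ down steps whose concatenation onto an arbitrary \emph{Fine} path $Q_1$ again yields a generalized $(k,r)$-Fine path of depth $a$. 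Since the claimed $A$-sequence is identical to that of Theorem~\ref{3thm: Dyck triangle, A and Z sequences} and the $Z$-sequence differs only by the subtracted term $(1+t)^{k-r-1} = \sum_j \binom{k-r-1}{j} t^j$, the target is to show that the $m \ge 1$ recurrence is unchanged while the $m=0$ recurrence loses exactly $\binom{k-r-1}{j-1}$ admissible choices of $Q_2$ for each $j$.

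For the $m \ge 1$ case I would argue that the Fine condition is vacuous at the level of $Q_2$. A forbidden subpath $U^r D_{k-1}$ ends with a $D_{k-1}$ step terminating at height $0$; if that step lies in $Q_1$ then the entire forbidden subpath lies in $Q_1$, so such patterns are already excluded by $Q_1$ being Fine. If instead it lies in $Q_2$, then the fewer than $k$ steps of $Q_2$ following it cannot climb back to the terminal height $km \ge k$, a contradiction. Hence $P$ is Fine if and only if $Q_1$ is, and (reusing the height argument of Theorem~\ref{3thm: Dyck triangle, A and Z sequences}) all $\binom{k}{j}$ subpaths $Q_2$ remain valid, giving $A(t) = (1+t)^k$.

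The main obstacle is the $m = 0$ case, where I must count precisely the subpaths $Q_2$ that introduce a forbidden pattern and confirm they neither straddle the $Q_1 Q_2$ junction nor overlap the subpaths that dip below $y = -a$. Two observations drive this. First, if a forbidden subpath completes inside $Q_2$, its closing $D_{k-1}$ step must be the \emph{last} step of $Q_2$: after a $D_{k-1}$ reaching height $0$ at an interior position $s$, the remaining $k - s$ steps would have to return from $0$ to $0$, which forces $k \mid s$ and hence $s = k$. Second, the up-run preceding this final $D_{k-1}$ cannot genuinely straddle into $Q_1$: for the run to extend past the start of $Q_2$, every earlier step of $Q_2$ would have to be an up step, forcing $Q_2 = U^{k-1} D_{k-1}$, whose run of $k-1 \ge r$ up steps already lies within $Q_2$. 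Consequently the offending subpaths are exactly those of the form $Q_2 = W\, U^r D_{k-1}$, where $W$ has length $k-r-1$ and carries the remaining $j-1$ down steps, giving $\binom{k-r-1}{j-1}$ of them — an intrinsic count independent of $Q_1$.

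Finally I would verify these $\binom{k-r-1}{j-1}$ forbidden subpaths are disjoint from the $\binom{k-a-1}{j}$ subpaths that descend below $y=-a$: during the suffix $U^r D_{k-1}$ the height stays between $k-1-r \ge 0$ and $0$, and during $W$ the worst case (all $j-1$ down steps first, from starting height $k(j-1)$) bottoms out at height $j-1 \ge 0$, so every such $Q_2$ remains weakly above $y=0 \ge -a$. Subtracting both disjoint families from the $\binom{k}{j}$ total yields the $m=0$ recurrence with coefficients $\binom{k}{j} - \binom{k-a-1}{j} - \binom{k-r-1}{j-1}$, whose generating function is $Z(t) = \frac{(1+t)^k - (1+t)^{k-a-1}}{t} - (1+t)^{k-r-1}$. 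Matching these recurrences against the $A$-$Z$ characterization~\eqref{1eq: A and Z sequence recursion}, exactly as in Proposition~\ref{2thm: higher-order Motzkin triangle as Riordan array}, then identifies $F^{k,a,r}$ as the asserted proper Riordan array.
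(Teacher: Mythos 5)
Your proposal is correct and follows essentially the same route as the paper: partition by the terminal length-$k$ subpath $Q_2$, observe that the $m\ge 1$ recurrence is untouched, and show that the hill-introducing choices of $Q_2$ at $m=0$ must end in $U^rD_{k-1}$, number $\binom{k-r-1}{j-1}$, and are disjoint from the subpaths dipping below $y=-a$. Your treatment of the $m\ge 1$ case and of patterns straddling the $Q_1Q_2$ junction is somewhat more explicit than the paper's, but the underlying argument is identical.
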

\begin{proof}
The argument is largely equivalent to the proof of Theorem \ref{3thm: Dyck triangle, A and Z sequences}.  The only difference comes in the $m=0$ case, where we also need to exclude potential length-$k$ terminal subpaths $Q_2$ that introduce a ``hill" of the form $U^r D_{k-1}$.

As a point along a $k$-Dyck path can only return to $y=0$ when its $x$-coordinate is divisible by $k$, all subpaths $Q_2$ that introduce a hill $U^r D_{k-1}$ must do so over their final $r+1$ steps.  This leaves $k-r-1$ steps at the beginning of our terminal subpath, the totality of which must begin at height $kj$ (for some $j \geq 0$) and end at height $y=k-1-r$.  It follows that, if $Q_2$ begins at height $kj$ and ends at height $0$, it must contain precisely $j+1$ steps of type $D_{k-1}$, and that precisely $j$ of those $D_{k-1}$ steps must be within its first $k-r-1$ steps. All of this means there are precisely $\binom{k-r-1}{j}$ potential subpaths $Q_2$ that begin at height $kj$, end at height $0$, and introduce a hill of the form $U^r D_{k-1}$.

Since they must reach a height of $k-1-r \geq 0$ after their first $k-r-1$ steps, the aforementioned restrictions of when a $k$-Dyck path can return to $y=0$ ensures that none of the $\binom{k-r-1}{j}$ hill-introducing subpaths $Q_2$ enumerated above can go below $y=0$.  This ensures that no potential subpaths $Q_2$ are ``doubly excluded" when citing the proof of Theorem \ref{3thm: Dyck triangle, A and Z sequences}, and we may modify the $m=0$ recurrence from that theorem to give

\begin{center}
$F_{n,0}^{k,a,r} = \left( \binom{k}{1} - \binom{k-a-1}{1} - \binom{k-r-1}{0} \right) F_{n-1,0}^{k,a,r} + \hdots + \left( \binom{k}{k} - \binom{k-a-1}{k} - \binom{k-r-1}{k-1} \right) F_{n-1,k-1}^{k,a,r}$
\end{center}

The desired $A$-sequence carries over from Theorem \ref{3thm: Dyck triangle, A and Z sequences}, whereas the desired $Z$-sequence follows directly from the recurrence above.
\end{proof}

\begin{corollary}
\label{3thm: Fine paths vs colored Motzkin paths}
Fix $k \geq 2$.  For all $n \geq 0$, $0 \leq m \leq n$, $0 \leq a \leq k-1$, and $1 \leq r \leq k-1$, the equality $F_{n,m}^{k,a,r} = M_{n,m}^{k-1}(\vec{\alpha},\vec{\beta})$ holds for the $(k-1)$-tuples $\vec{\alpha} = (\alpha_0,\hdots,\alpha_{k-2})$ and $\vec{\beta} = (\beta_0,\hdots,\beta_{k-2})$ with $\alpha_i = \binom{k}{i+1} - \binom{k-a-1}{i+1} - \binom{k-r-1}{i}$ and $\beta_i = \binom{k}{i+1}$ for all $0 \leq i \leq k-2$.
\end{corollary}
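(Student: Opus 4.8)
The plan is to mirror the one-line proof of Corollary \ref{3thm: Dyck paths vs colored Motzkin paths}: since every proper Riordan array is uniquely determined by its $A$- and $Z$-sequences, it suffices to check that the sequences furnished by Theorem \ref{3thm: Fine triangle, A and Z sequences} for $F^{k,a,r}$ coincide with those furnished by Proposition \ref{2thm: higher-order Motzkin triangle as Riordan array} for the order-$(k-1)$ $(\vec{\alpha},\vec{\beta})$-colored Motzkin triangle under the stated choice of tuples. Both triangles have already been established as proper Riordan arrays, so the entire argument reduces to reading off and comparing power-series coefficients; there is no need to revisit the underlying lattice paths.

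First I would match the $A$-sequences. Expanding $A(t) = (1+t)^k = \sum_{j=0}^k \binom{k}{j} t^j$ and comparing with the Motzkin form $A(t) = 1 + \beta_0 t + \cdots + \beta_{k-2} t^{k-1} + t^k$ of Proposition \ref{2thm: higher-order Motzkin triangle as Riordan array} immediately forces $\beta_i = \binom{k}{i+1}$ for $0 \le i \le k-2$, with the constant term $\binom{k}{0}=1$ and the leading term $\binom{k}{k}=1$ matching the prescribed endpoints of the Motzkin $A$-sequence. This is exactly the $\vec{\beta}$ claimed in the statement, and it is identical to the $A$-sequence already handled in Corollary \ref{3thm: Dyck paths vs colored Motzkin paths}.

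Next I would match the $Z$-sequences. Writing $(1+t)^k - (1+t)^{k-a-1} = \sum_{j \ge 1} \left[ \binom{k}{j} - \binom{k-a-1}{j} \right] t^j$, where the $j=0$ terms cancel since both equal $1$, and dividing by $t$ shifts indices so that $\frac{(1+t)^k - (1+t)^{k-a-1}}{t} = \sum_{i \ge 0} \left[ \binom{k}{i+1} - \binom{k-a-1}{i+1} \right] t^i$. Subtracting $(1+t)^{k-r-1} = \sum_{i \ge 0} \binom{k-r-1}{i} t^i$ then yields $z_i = \binom{k}{i+1} - \binom{k-a-1}{i+1} - \binom{k-r-1}{i}$, which is precisely the $\alpha_i$ claimed for $0 \le i \le k-2$.

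The one place requiring care, and the step I expect to be the only genuine obstacle, is confirming that this $Z(t)$ has exactly the shape demanded by Proposition \ref{2thm: higher-order Motzkin triangle as Riordan array}: a polynomial of degree $k-1$ with leading coefficient $1$ and no higher terms. For the leading coefficient I would compute $z_{k-1} = \binom{k}{k} - \binom{k-a-1}{k} - \binom{k-r-1}{k-1}$; here $\binom{k-a-1}{k}=0$ because $a \ge 0$ forces $k-a-1 < k$, and $\binom{k-r-1}{k-1}=0$ because $r \ge 1$ forces $k-r-1 < k-1$, leaving $z_{k-1} = 1$. For $i \ge k$ all three binomial coefficients vanish, so $z_i = 0$. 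This is precisely where the hypotheses $0 \le a \le k-1$ and $1 \le r \le k-1$ are used, and it explains why the correspondence would degrade outside this range. Once the two pairs of sequences are seen to agree, the uniqueness of proper Riordan arrays by their $A$- and $Z$-sequences completes the proof.
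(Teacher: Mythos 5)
Your proposal is correct and takes essentially the same route as the paper, whose proof is literally the one-line ``comparison of Proposition \ref{2thm: higher-order Motzkin triangle as Riordan array} and Theorem \ref{3thm: Fine triangle, A and Z sequences}'' that you carry out explicitly; your coefficient extraction and the check that $z_{k-1}=1$ and $z_i=0$ for $i\geq k$ are exactly the details the paper leaves implicit. The only pedantic addendum is that matching $A$- and $Z$-sequences pins down the array only together with the top-left entry, so one should also observe $F_{0,0}^{k,a,r}=M_{0,0}^{k-1}(\vec{\alpha},\vec{\beta})=1$, which is immediate.
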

\begin{proof}
Follows from a comparison of Proposition \ref{2thm: higher-order Motzkin triangle as Riordan array} and Theorem \ref{3thm: Fine triangle, A and Z sequences}
\end{proof}

For $k=2,3,4$ and $a=0$, the $(k-1)$-tuples $(\vec{\alpha},\vec{\beta})$ guaranteed by Corollary \ref{3thm: Fine paths vs colored Motzkin paths} are summarized in Table \ref{3tab: k-Fine paths}.  Once again, see Appendix \ref{sec: appendix} for how these colorations fit within the wider context of $(\vec{\alpha},\vec{\beta})$-colored Motzkin paths.

\begin{table}[ht!]
\centering
\begin{tabular}{|c|c|c|c|}
\hline
$\vec{\alpha},\vec{\beta}$ & $\boldsymbol{a=0,r=1}$ & $\boldsymbol{a=0,r=2}$ & $\boldsymbol{a=0,r=3}$ \\ \hline
$\boldsymbol{k=2}$ & $(0),(2)$ & - & -\\ \hline
$\boldsymbol{k=3}$ & $(0,1),(3,3)$ & $(0,2),(3,3)$ & -\\ \hline
$\boldsymbol{k=4}$ & $(0,1,2),(4,6,4)$ & $(0,2,3),(4,6,4)$ & $(0,3,3),(4,6,4)$\\ \hline
\end{tabular}
\caption{$(k-1)$-tuples $\vec{\alpha},\vec{\beta}$ such that $F_{n,0}^{k,a,r} = M_{n,0}^{k-1}(\vec{\alpha},\vec{\beta})$, as proven in Corollary \ref{3thm: Fine paths vs colored Motzkin paths}.}
\label{3tab: k-Fine paths}
\end{table}

The explicit bijection between $\mathcal{D}_{n,m}^{k,a}$ and $\mathcal{M}_{n,m}^{k-1}(\vec{\alpha},\vec{\beta})$ from Subsection \ref{subsec: k-Dyck paths} restricts to a bijection between $\mathcal{F}_{n,m}^{k,a,r}$ and a (distinct) set of colored Motzkin paths $\mathcal{M}_{n,0}^{k-1}(\vec{\alpha},\vec{\beta})$, with $\vec{\alpha},\vec{\beta}$ as determined by Corollary \ref{3thm: Fine paths vs colored Motzkin paths}.  Also similar to Subsection \ref{subsec: k-Dyck paths} is the fact Theorem \ref{3thm: Fine triangle, A and Z sequences} may be used to characterize the integer triangles $F^{k,a,r}$ as proper Riordan arrays:

\begin{corollary}
\label{3thm: Fine triangles as Fuss-Catalan triangles}
For any $k \geq 2$, $0 \leq a \leq k-1$, and $1 \leq r \leq k-1$, $F^{k,a,r}$ is the proper Riordan array $\mathcal{R}(d(t),h(t))$ with $d(t) = \frac{C_k(t)^k}{C_k(t)^{k-a-1} + t \kern+1pt C_k(t)^{2k-r-1}}$ and $h(t) = t \kern+1pt C_k(t)^k$.
\end{corollary}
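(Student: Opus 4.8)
The plan is to proceed exactly as in the proof of Corollary \ref{3thm: Dyck path triangles as Fuss-Catalan triangles}: I would take the $A$- and $Z$-sequences furnished by Theorem \ref{3thm: Fine triangle, A and Z sequences} and verify that the claimed $d(t)$ and $h(t)$ satisfy the defining identities \eqref{1eq: A and Z sequences vs d(t),h(t)}, namely $h(t) = t \, A(h(t))$ and $d(t) = d(0)/(1 - t \, Z(h(t)))$. Since a proper Riordan array is uniquely determined by its $A$- and $Z$-sequences, establishing these two identities suffices.

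First I would dispatch $h(t)$. Because the triangle $F^{k,a,r}$ shares the same $A$-sequence $A(t) = (1+t)^k$ as the triangle $D^{k,a}$ of Theorem \ref{3thm: Dyck triangle, A and Z sequences}, the verification that $h(t) = t \, C_k(t)^k$ is identical: the Fuss-Catalan identity $C_k(t) = 1 + t \, C_k(t)^k$ yields $1 + h(t) = C_k(t)$, whence $t \, A(h(t)) = t \, (1 + h(t))^k = t \, C_k(t)^k = h(t)$.

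The substantive step is the computation of $d(t)$. Using $d(0) = F_{0,0}^{k,a,r} = 1$ and substituting $h(t) = t \, C_k(t)^k$ (so that $1 + h(t) = C_k(t)$) into the $Z$-sequence of Theorem \ref{3thm: Fine triangle, A and Z sequences}, I would compute $t \, Z(h(t))$ and simplify via repeated use of the Fuss-Catalan identity. The new summand $-(1+t)^{k-r-1}$ appearing in the $Z$-sequence, relative to Theorem \ref{3thm: Dyck triangle, A and Z sequences}, contributes an extra term $-t \, C_k(t)^{k-r-1}$ to $t \, Z(h(t))$, so that $1 - t \, Z(h(t)) = C_k(t)^{-a-1} + t \, C_k(t)^{k-r-1}$. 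Taking reciprocals and clearing the negative power by multiplying numerator and denominator by $C_k(t)^{k}$ then yields the claimed $d(t) = C_k(t)^k / (C_k(t)^{k-a-1} + t \, C_k(t)^{2k-r-1})$.

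The main obstacle is purely the algebraic bookkeeping in this final simplification---tracking the powers of $C_k(t)$ when reducing $\frac{C_k(t)^k - C_k(t)^{k-a-1}}{t \, C_k(t)^k}$ and recombining with the hill-correction term---but no idea beyond those already deployed in Corollary \ref{3thm: Dyck path triangles as Fuss-Catalan triangles} is needed.
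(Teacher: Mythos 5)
Your proposal is correct and follows exactly the paper's own argument: both take the $A$- and $Z$-sequences from Theorem \ref{3thm: Fine triangle, A and Z sequences}, note that the verification of $h(t)=t\,C_k(t)^k$ carries over unchanged from Corollary \ref{3thm: Dyck path triangles as Fuss-Catalan triangles}, and then compute $1-t\,Z(h(t))=C_k(t)^{-a-1}+t\,C_k(t)^{k-r-1}$ before clearing the negative power. Your algebra matches the paper's computation step for step.
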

\begin{proof}
We use $A(t), Z(t)$ from Theorem \ref{3thm: Fine triangle, A and Z sequences} to verify the identities of \eqref{1eq: A and Z sequences vs d(t),h(t)}.  As $A(t)$ is the same as in Subsection \ref{subsec: k-Dyck paths}, verification of $h(t)$ is identical to the proof of Corollary \ref{3thm: Dyck path triangles as Fuss-Catalan triangles}.  Verification of $d(t)$ now takes the form.

$$\frac{d(0)}{1 - t \kern+1pt Z(h(t))} = \frac{1}{1 - \left( \frac{(1 + t \kern+1pt C_k(t)^k)^k - (1 + t \kern+1pt C_k(t)^k)^{k-a-1}}{t \kern+1pt C_k(t)^k} - (1 + t \kern+1pt C_k(t)^k)^{k-r-1} \right)}$$
$$= \frac{1}{1 - t \left( \frac{C_k(t)^k - C_k(t)^{k-a-1}}{t \kern+1pt C_k(t)^k} - C_k(t)^{k-r-1} \right)} = \frac{1}{1 - 1 + \frac{C_k(t)^{k-a-1}}{C_k(t)^k} + t \kern+1pt C_k(t)^{k-r-1}}$$
$$\frac{C_k(t)^k}{C_k(t)^{k-a-1} + t \kern+1pt C_k(t)^{2k-r-1}} = d(t).$$
\end{proof}

Substituting $a=0$ into $d(t)$ from Corollary \ref{3thm: Fine triangles as Fuss-Catalan triangles} provides a relatively simple relationship for the generating function $F_{k,r}(t)$ of the $(k,r)$-Fine numbers $\lbrace F_{n,0}^{k,0,r} \rbrace_{n=0}^\infty$ as $F_{k,r}(t) = \frac{C_k(t)}{1 + t \kern+1pt C_k(t)^{k-r}}$.  Observe that this formula simplifies to the well-known relationship of $F(t) = \frac{C_2(t)}{1 + t \kern+1pt C_2(t)}$ in the case of $k=2,r=1$.  

\subsection{$(\vec{\alpha},\vec{\beta})$-Colored Motzkin Numbers and $k$-Dyck Paths\\with Restrictions on Peak Heights}
\label{subsec: peak-restricted k-Dyck paths}

We now consider subsets of $k$-Dyck paths whose peaks must appear at a fixed height, modulo $k$.  By a ``peak" we mean any subpath of the form $U D_{k-1}$, with the height of a peak equaling the height of (the right end of) the $U$ step in the subpath.

For any $0 \leq i \leq k-1$, we say that $P \in \mathcal{D}_{n,m}^{k,a}$ has \textbf{peak parity} $i$ if the height of every one of its peaks is equivalent to $i \kern-6pt \mod \kern-3pt (k)$.  We denote the subset of $\mathcal{D}_{n,m}^{k,a}$ consisting of all paths with peak parity $i$ by $\mathcal{D}_{n,m}^{k,a}(i)$.  To avoid the ambiguity of categorizing paths of length $0$, which have no peaks, we henceforth restrict our attention to sets $\mathcal{D}_{n,m}^{k,a}(i)$ with $n > 0$.

The goal of this subsection is to generalize the following pair of identities, which were originally proven by Callan \cite{Callan}:

\begin{enumerate}
\item The set $\mathcal{D}_{n,0}^{2,0}(0)$ consisting of all $2$-Dyck paths with peaks only at even height is enumerated by the Riordan numbers $R_n = M_{n,0}(0,1)$. 
\item The set $\mathcal{D}_{n,0}^{2,0}(1)$ consisting of all $2$-Dyck paths with peaks only at odd height is enumerated by the shifted Motzkin numbers $M_{n-1} = M_{n-1,0}(1,1)$.
\end{enumerate}

\noindent The parity-$0$ case, corresponding to Callan's first identity, may be directly generalized as follows:

\begin{theorem}
\label{3thm: peak parity 0}
Fix $k \geq 2$ and $0 \leq a \leq k-1$.  Then $\vert \mathcal{D}_{n,m}^{k,a}(0) \vert = M_{n,m}^{k-1}(\vec{\alpha},\vec{1})$ for all $n,m$, where $\vec{1} = (1,1,\hdots,1)$ and $\vec{\alpha} = (\alpha_0,\hdots,\alpha_{k-2})$ satisfies
$$\alpha_i=
\begin{cases}
1    &\text{if }i<a; \\
0    &\text{if }i \geq a.
\end{cases}$$
\end{theorem}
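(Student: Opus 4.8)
The plan is to show that the triangle with $(n,m)$-entry $|\mathcal{D}_{n,m}^{k,a}(0)|$ is a proper Riordan array and to read off its $A$- and $Z$-sequences, exactly as in the proofs of Theorem \ref{3thm: Dyck triangle, A and Z sequences} and Theorem \ref{3thm: Fine triangle, A and Z sequences}; comparison with Proposition \ref{2thm: higher-order Motzkin triangle as Riordan array} then identifies the coloration. As before, I would fix $m \geq 0$, partition $\mathcal{D}_{n,m}^{k,a}(0)$ according to the terminal length-$k$ subpath $Q_2$, and write $P = Q_1 Q_2$. If $Q_2$ contains $j$ down steps then $Q_1$ ends at semiheight $m-1+j$, so the map $P \mapsto Q_1$ should give $|S_{Q_2}| = |\mathcal{D}_{n-1,m-1+j}^{k,a}(0)|$ once I verify that the peak-parity-$0$ constraint decouples across the cut.

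The geometric heart of the argument is a single congruence. A $k$-Dyck path starting at a height divisible by $k$ changes height by $+1$ under $U$ and by $1-k \equiv 1 \pmod{k}$ under $D_{k-1}$, so after $p$ steps inside a length-$k$ block its height is $\equiv p \pmod{k}$. Hence a peak whose apex lies $p$ steps into a block (with $1 \leq p \leq k-1$) sits at height $\not\equiv 0 \pmod{k}$ and is forbidden, whereas a peak straddling the boundary between $Q_1$ and $Q_2$ has its apex exactly at the end of $Q_1$, i.e.\ at a multiple of $k$, and is always allowed. This is the key step and the main thing to get right: it shows $P$ has peak parity $0$ if and only if $Q_1$ has peak parity $0$ and $Q_2$ contains no internal $U D_{k-1}$ factor, cleanly decoupling the two pieces. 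Since a word in $\{U, D_{k-1}\}$ with no internal $U D_{k-1}$ factor must have the form $D_{k-1}^{\,s} U^{\,k-s}$, for each number $j$ of down steps there is exactly one admissible block.

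This immediately forces $\vec{\beta} = \vec{1}$. For a terminal block ending at semiheight $m \geq 1$, the unique admissible block $D_{k-1}^{\,i+1} U^{\,k-1-i}$ (with $i+1$ downs) stays weakly above $y=0$, so the coefficient of $|\mathcal{D}_{n-1,m-1+j}^{k,a}(0)|$ is $1$ for every $j$, matching $A(t) = 1 + t + \cdots + t^{k}$ and hence $\beta_i = 1$ for all $i$. For the $m=0$ row the same block $D_{k-1}^{\,i+1} U^{\,k-1-i}$ is again the only candidate, but now I must track its lowest point: starting at height $ki$ and descending first, it dips to height $(i+1)-k$, so it remains weakly above $y=-a$ exactly when $i+1 \geq k-a$. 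The $Z$-sequence coefficient $\alpha_i$ is therefore the number of admissible $m=0$ blocks with $i+1$ downs, namely $1$ when $i \geq k-1-a$ and $0$ otherwise, with the leading $t^{k-1}$ term of $Z(t)$ coming from the (uncolored) maximal down step. Comparison with Proposition \ref{2thm: higher-order Motzkin triangle as Riordan array} then gives $|\mathcal{D}_{n,m}^{k,a}(0)| = M_{n,m}^{k-1}(\vec{\alpha},\vec{1})$ for this $\vec{\alpha}$.

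I expect the congruence-and-decoupling step of the second paragraph to be the main obstacle, since everything else is the routine block bookkeeping already used for Theorem \ref{3thm: Dyck triangle, A and Z sequences}; in particular the depth bound only ever affects the $m=0$ row, and there it either admits or kills a single block, so no subpath is excluded for two competing reasons. As a consistency check worth recording, the resulting coloration satisfies $\vec{\alpha}=\vec{0}$ when $a=0$ and $\vec{\alpha}=\vec{1}$ when $a=k-1$, and in the case $k=2$ it reduces to $(\alpha_0,\beta_0)=(0,1)$, recovering Callan's identity that parity-$0$ paths are counted by the Riordan numbers $R_n = M_{n,0}(0,1)$. I would double-check the intermediate cases $0<a<k-1$ numerically (e.g.\ $k=3$, $a=1$) against the recursion of Proposition \ref{2thm: higher-order Motzkin triangle as Riordan array} before finalizing the exact form of the case split in $\vec{\alpha}$.
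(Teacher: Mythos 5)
Your block decomposition is exactly the paper's argument in Riordan-array clothing: the paper phrases it as an explicit bijection $\psi$ that replaces each $U$ step of a colored Motzkin path by $U^k$ and each $D_i$ step by $(D_{k-1})^{i+1}U^{k-i-1}$, but the substance --- the congruence (height $\equiv$ $x$-coordinate $\pmod{k}$) forcing every peak apex to a block boundary, hence exactly one admissible block $(D_{k-1})^{j}U^{k-j}$ per down-step count, with the depth constraint biting only for blocks ending at height $0$ --- is the same. So on approach you are aligned with the paper, and your decoupling step in the second paragraph is sound.

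The problem is the conclusion. Your minimum-height computation for the block replacing a $D_i$ step that ends at height $0$ (start at $ki$, bottom out at $(i+1)-k$) yields $\alpha_i=1$ exactly when $i\ge k-1-a$, i.e.\ the \emph{steep} Motzkin down steps are the ones admissible at height $0$. Theorem \ref{3thm: peak parity 0} asserts the reverse, $\alpha_i=1$ exactly when $i<a$. The two colorations agree only for $a=0$ and $a=k-1$ (the only cases used in Corollaries \ref{3thm: peak parity 0 corollary} and \ref{3thm: peak parity k-1}), and you deferred the intermediate cases to a numerical check rather than reconciling them. Carrying out that check settles the matter in your favor: for $k=3$, $a=1$, $n=1$, the set $\mathcal{D}_{1,0}^{3,1}(0)$ is empty (the only length-$3$ paths staying weakly above $y=-1$ are $UUD_2$ and $UD_2U$, whose peaks sit at heights $2$ and $1$), whereas the stated coloration $\vec{\alpha}=(1,0)$ gives $M_{1,0}^{2}\bigl((1,0),\vec{1}\bigr)=\alpha_0=1$; your coloration $(0,1)$ gives $0$. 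The source of the discrepancy is an arithmetic slip in the paper's own proof, which asserts that the block $(D_{k-1})^{a}U^{k-a}$ ending at height $0$ reaches minimum height $-a$; it actually starts at height $k(a-1)$ and reaches $a-k$. So your proposal is a correct proof of a corrected statement rather than of the statement as printed: to ``finalize the exact form of the case split,'' as you put it, the condition $i<a$ in the theorem must be replaced by $i\ge k-1-a$.
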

\begin{proof}
As points $(x,y)$ along $P \in \mathcal{D}_{n,m}^{k,a}(0)$ must satisfy $x = y \kern-5pt \mod \kern-3pt (k)$, all peaks in $P \in \mathcal{D}_{n,m}^{k,a}(0)$ must have $x$-coordinates that are divisible by $k$.  This means that every such $P$ may be subdivided into a sequence of length-$k$ subpaths, each of which is of the form $(D_{k-1})^i U^{k-i}$ for some $0 \leq i \leq k$.  For the subpath $(D_{k-1})^i U^{k-i}$, the subheight of the terminal point minus the subheight of the initial point is $1-i$.  Also notice that the lowest point in the subpath $(D_{k-1})^i U^{k-i}$ is $k-i$ units lower than the terminal point of the subpath.

We define an explicit bijection $\psi: \mathcal{M}_{n,m}^{k-1}(\vec{\alpha},\vec{1}) \rightarrow \mathcal{D}_{n,m}^{k,a}(0)$ that is similar to the bijection described in Subsection \ref{subsec: k-Dyck paths}.  So take any $P \in \mathcal{M}_{n,m}^{k-1}(\vec{\alpha},\vec{1})$.  To obtain $\psi(P)$, replace each $U$ step of $P$ with the length-$k$ subpath $U^k$, and (for each $0 \leq i \leq k-1$) replace each $D_i$ step of $P$ with the length-$k$ subpath $(D_{k-1})^{i+1} U^{k-i-1}$.  For an example of this map, see Figure \ref{3fig: peak parity lemma example}.

The map $\psi$ is clearly injective, and its image is clearly some subset of generalized $k$-Dyck path of semilength $n$ and semiheight $m$.  By construction, all peaks in $\psi(P)$ are at a height of $0 \kern-6pt \mod \kern-3pt (k)$.  As the steepest allowable down steps of $P$ ending at height $0$ are $D_{a-1}$, the ``lowest-dipping" length-$k$ subpaths of $\psi(P)$ that end at height $0$ are $(D_{k-1})^a U^{k-a}$ and they reach a minimum height of $y=-a$.  Thus $\psi(P)$ remains weakly above $y=-a$.  The fact that $\psi$ is surjective follows from the observation that elements of $\mathcal{D}_{n,m}^{k,a}(0)$ have one allowable length-$k$ subpath with a particular overall change in subheight.  The choice of $\vec{\alpha}$ restricts the options for subpaths that end at subheight $0$ to those that stay weakly above $y=-a$. 
\end{proof}

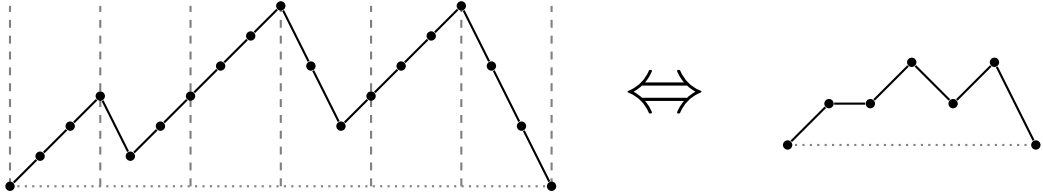
\begin{figure}[ht!]
\centering
\begin{tikzpicture}
    [scale=.4, auto=left, every node/.style = {circle, fill=black, inner sep=1.25pt}]
    \draw[thick,dotted, color = gray] (0,0) to (18,0);
    \draw[thick,dashed, color = gray] (0,0) to (0,6);
    \draw[thick,dashed, color = gray] (3,0) to (3,6);
    \draw[thick,dashed, color = gray] (6,0) to (6,6);
    \draw[thick,dashed, color = gray] (9,0) to (9,6);
    \draw[thick,dashed, color = gray] (12,0) to (12,6);
    \draw[thick,dashed, color = gray] (15,0) to (15,6);
    \draw[thick,dashed, color = gray] (18,0) to (18,6);    
    \node(1*) at (0,0) {};
    \node(2*) at (1,1) {};
    \node(3*) at (2,2) {};
    \node(4*) at (3,3) {};
    \node(5*) at (4,1) {};
    \node(6*) at (5,2) {};
    \node(7*) at (6,3) {};
    \node(8*) at (7,4) {};
    \node(9*) at (8,5) {};
    \node(10*) at (9,6) {};
    \node(11*) at (10,4) {};
    \node(12*) at (11,2) {};
    \node(13*) at (12,3){};
    \node(14*) at (13,4) {};
    \node(15*) at (14,5) {};
    \node(16*) at (15,6) {};
    \node(17*) at (16,4) {};
    \node(18*) at (17,2) {};
    \node(19*) at (18,0) {};
    \draw[thick] (1*) to (2*);
    \draw[thick] (2*) to (3*);
    \draw[thick] (3*) to (4*);
    \draw[thick] (4*) to (5*);
    \draw[thick] (5*) to (6*);
    \draw[thick] (6*) to (7*);
    \draw[thick] (7*) to (8*);
    \draw[thick] (8*) to (9*);
    \draw[thick] (9*) to (10*);
    \draw[thick] (10*) to (11*);
    \draw[thick] (11*) to (12*);
    \draw[thick](12*) to (13*);
    \draw[thick](13*) to (14*);
    \draw[thick](14*) to (15*);
    \draw[thick](15*) to (16*);
    \draw[thick](16*) to (17*);
    \draw[thick](17*) to (18*);
    \draw[thick](18*) to (19*);    
\end{tikzpicture}
\hspace{.25in}
\scalebox{2.5}{\raisebox{12pt}{$\Leftrightarrow$}}
\hspace{.25in}
\raisebox{-30pt}{
\begin{tikzpicture}
	[scale=.55, auto=left, every node/.style = {circle, fill=black, inner sep=1.25pt}]
	\phantom{\draw[thick,dotted, color = gray] (0,0) to (5,0);}
	\draw[thick,dotted, color = gray] (0,3) to (6,3);
	\phantom{\draw[thick,dotted, color = gray] (0,6) to (5,6);}
	\node(1*) at (0,3) {};
    \node(2*) at (1,4) {};
    \node(3*) at (2,4) {};
    \node(4*) at (3,5) {};
    \node(5*) at (4,4) {};
    \node(6*) at (5,5) {};
    \node(7*) at (6,3) {};
    \draw[thick] (1*) to (2*);
    \draw[thick] (2*) to (3*);
    \draw[thick] (3*) to (4*);
    \draw[thick] (4*) to (5*);
    \draw[thick] (5*) to (6*);
    \draw[thick] (6*) to (7*);
\end{tikzpicture}} 

\vspace{-.3in}

\caption{An example of the bijection between $\mathcal{D}_{n,m}^{k,a}(0)$ and $M_{n,m}^{k-1}(\vec{\alpha},\vec{1})$ from the proof of Theorem \ref{3thm: peak parity 0}, here for $k=3$, $n=4$, $a= m = 0$.  In this case $\vec{\alpha} = \vec{0}$, so colors have been suppressed.}
\label{3fig: peak parity lemma example}
\end{figure}

\begin{corollary}
\label{3thm: peak parity 0 corollary}
Fix $k \geq 2$.  Then $\vert \mathcal{D}_{n,m}^{k,0} (0) \vert = M_{n,m}^{k-1}(\vec{0},\vec{1})$ for all $n \geq 0$ and $0 \leq m \leq n$, where $\vec{0} = (0,0,\hdots,0)$ and $\vec{1} = (1,1,\hdots,1)$. 
\end{corollary}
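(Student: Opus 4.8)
The plan is to invoke Theorem~\ref{3thm: peak parity 0} directly with $a = 0$, since the corollary is precisely the $a=0$ instance of that theorem. First I would confirm that $a = 0$ lies in the admissible range $0 \leq a \leq k-1$ required by the theorem; this holds automatically because $k \geq 2$ forces $k-1 \geq 1 > 0$. With the hypotheses of the theorem satisfied, the identity $\vert \mathcal{D}_{n,m}^{k,0}(0) \vert = M_{n,m}^{k-1}(\vec{\alpha},\vec{1})$ follows immediately, and the only remaining task is to pin down the tuple $\vec{\alpha}$ in the case $a = 0$.

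Next I would evaluate the piecewise formula for the entries $\alpha_i$ from Theorem~\ref{3thm: peak parity 0}. When $a = 0$, the condition $i < a$ becomes $i < 0$, which no index in the range $0 \leq i \leq k-2$ satisfies; hence every entry falls into the second branch and equals $0$. Therefore $\vec{\alpha} = (0,\hdots,0) = \vec{0}$, and substituting this into the theorem's conclusion yields exactly the claimed identity $\vert \mathcal{D}_{n,m}^{k,0}(0) \vert = M_{n,m}^{k-1}(\vec{0},\vec{1})$.

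Because this corollary is a pure specialization, I do not anticipate any genuine obstacle: there is no new combinatorial construction to build, as the bijection $\psi$ from the proof of Theorem~\ref{3thm: peak parity 0} already handles all depths $a$ uniformly, and the claim over $0 \leq m \leq n$ is inherited verbatim. The only point demanding any care is the vacuity of the condition $i < 0$, which is what collapses the coloring to $\vec{0}$; combinatorially this reflects the fact that at depth $a = 0$ a path may not dip below the axis at all, so no $D_i$ step is permitted to terminate at height $y = 0$.
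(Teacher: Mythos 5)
Your proof is correct and is exactly the intended argument: the corollary is the $a=0$ specialization of Theorem~\ref{3thm: peak parity 0}, where the vacuous condition $i<0$ forces $\vec{\alpha}=\vec{0}$. The paper states the corollary without proof precisely because this is the whole content.
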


The primary insight in generalizing Callan's second identity is that $\mathcal{D}_{n,0}^{2,0}(1)$ lies in bijection with $\mathcal{D}_{n-1,0}^{2,1}(0)$.  This follows from the map that deletes the initial $U$ step and the final $D_1$ step of any $P \in \mathcal{D}_{n,0}^{2,0}(1)$, and then shifts the resulting path down by one.  Generalizing this map gives:

\begin{corollary}
\label{3thm: peak parity k-1}
Fix $k \geq 2$.  Then $\vert \mathcal{D}_{n,m}^{k,0} (k-1) \vert = M_{n-1,m}^{k-1}(\vec{1},\vec{1})$ for all $n \geq 1$ and $0 \leq m \leq n$, where $\vec{1} = (1,1,\hdots,1)$.
\end{corollary}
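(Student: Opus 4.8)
The plan is to reduce this corollary to the peak-parity-$0$ case already settled in Theorem \ref{3thm: peak parity 0}. Concretely, I would first exhibit a bijection $\Phi : \mathcal{D}_{n,m}^{k,0}(k-1) \to \mathcal{D}_{n-1,m}^{k,k-1}(0)$, and then invoke Theorem \ref{3thm: peak parity 0} with $a = k-1$. Note that when $a = k-1$ the vector $\vec{\alpha}$ produced by that theorem has $\alpha_i = 1$ for every $0 \leq i \leq k-2$ (since $i < k-1$ always holds in that range), so $\vec{\alpha} = \vec{1}$ and Theorem \ref{3thm: peak parity 0} gives $\vert \mathcal{D}_{n-1,m}^{k,k-1}(0) \vert = M_{n-1,m}^{k-1}(\vec{1},\vec{1})$. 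Transporting this count back along $\Phi$ yields the claimed identity. This is precisely the order-$(k-1)$ analogue of the observation recorded just before the statement, that $\mathcal{D}_{n,0}^{2,0}(1)$ is in bijection with $\mathcal{D}_{n-1,0}^{2,1}(0)$.

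To build $\Phi$, I would generalize the ``delete a hill at each end and drop the path'' map from the $k=2$ case. First I would observe that any $P \in \mathcal{D}_{n,m}^{k,0}(k-1)$ must begin with at least $k-1$ consecutive $U$ steps: a $D_{k-1}$ step can be taken only from height $\geq k-1$ without dipping below $y=0$, so no down step can occur before the path has climbed to height $k-1$. Dually, in the semiheight-$0$ setting the path must end with a $D_{k-1}$ step returning it to $y=0$. The map $\Phi$ then deletes the initial $k-1$ $U$ steps together with the terminal $D_{k-1}$ step (removing $k$ unit steps in all, hence lowering the semilength from $n$ to $n-1$) and shifts the resulting path downward by $k-1$.

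I would then verify that $\Phi$ lands in the correct set and is invertible. The downward shift by $k-1$ sends the original floor $y=0$ to $y=-(k-1)$, giving depth $a = k-1$; a short bookkeeping of the deleted endpoints shows the terminal height changes by $0$, so the semiheight remains $m$; and since every surviving peak height drops by exactly $k-1$, peaks that were $\equiv k-1 \pmod{k}$ become $\equiv 0 \pmod{k}$, yielding peak parity $0$. The inverse prepends $k-1$ $U$ steps, appends one $D_{k-1}$ step, and shifts upward by $k-1$. The step I expect to be the main obstacle is confirming that this surgery neither creates nor destroys peaks of the wrong parity at the two splice points and that the boundary condition is preserved exactly; in particular, the claim that the terminal step is forced to be $D_{k-1}$ is transparent when $m=0$ (the clean generalization of Callan's identity), but the interaction between the terminal descent and the peak-parity constraint for $m > 0$ must be examined carefully before one can be certain $\Phi$ is a genuine bijection in that regime.
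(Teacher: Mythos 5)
Your approach is exactly the paper's: observe that $P$ must decompose as $U^{k-1}P'D_{k-1}$, shift the middle down by $k-1$ to obtain a bijection onto $\mathcal{D}_{n-1,m}^{k,k-1}(0)$, and then apply Theorem \ref{3thm: peak parity 0} with $a=k-1$, noting that the resulting $\vec{\alpha}$ is $\vec{1}$. For $m=0$ your argument is complete and matches the paper's step for step.

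The reservation you raise about $m>0$ is not a technicality to be checked later — it is fatal, and the identity as printed is false for $m>0$. Take $k=2$, $n=2$, $m=1$: both $UD_1UU$ and $UUUD_1$ lie in $\mathcal{D}_{2,1}^{2,0}(1)$ (their peaks sit at heights $1$ and $3$), so the left-hand side is $2$, while $M_{1,1}^{1}(\vec{1},\vec{1})=1$ (the single path $U$). The obstruction is precisely the one you identified: a path ending at height $km>0$ need not terminate in a $D_{k-1}$ step (it may even have no peaks at all, in which case it vacuously has every peak parity), so the surgery $P=U^{k-1}P'D_{k-1}$ is unavailable. You have not missed an idea that the authors supply: the paper's own proof silently constructs the bijection only as $\phi:\mathcal{D}_{n,0}^{k,0}(k-1)\to\mathcal{D}_{n-1,0}^{k,k-1}(0)$ and never addresses $m>0$, so Corollary \ref{3thm: peak parity k-1} should really be read (or restated) with $m=0$. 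Your write-up is, if anything, more candid about this than the source.
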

\begin{proof}
We establish a bijection $\phi : \mathcal{D}_{n,0}^{k,0}(k-1) \rightarrow \mathcal{D}_{n-1,0}^{k,k-1}(0)$.  For any $P \in \mathcal{D}_{n,0}^{k,0}(k-1)$, it must be the case that $P$ decomposes as $P = U^{k-1} P' D_{k-1}$.  Here $P'$ begins and ends at height $k-1$ and thus corresponds to some $Q \in \mathcal{D}_{n-1,0}^{k,k-1}$.  As $P$ has peak parity $k-1$, the path $Q$ must have peak parity $0$.  The map $\phi(P) = Q$ represents our desired bijection.

Applying Lemma \ref{3thm: peak parity 0} then gives $\vert \mathcal{D}_{n,0}^{k,0}(k-1) \vert = \vert \mathcal{D}_{n-1,0}^{k,k-1}(0) \vert = M_{n-1,0}^{k-1} (\vec{\alpha},\vec{1})$, where $\alpha_i = 1$ for all $i < k-1$ and thus for all coordinates $0 \leq i \leq k-2$.   
\end{proof}

The difficulty in generalizing Theorem \ref{3thm: peak parity 0} to peak parities other than $0$ or $k-1$ derives from the nature of our bijection between $\mathcal{D}_{n,m}^{k,a}$ and $\mathcal{M}_{n,m}^{k-1}(\vec{\alpha},\vec{\beta})$.  In particular, that bijection requires the Dyck paths have a length that is divisible by $kn$.  The decomposition technique of Corollary \ref{3thm: peak parity k-1} could be extended to paths of arbitrary peak parity $h$, but the required decomposition $P= U^h P' D_{k-1}$ only leaves a central subpath $P'$ with length divisible by $kn$ when $h=k-1$. 

\subsection{$(\vec{\alpha},\vec{\beta})$-Colored Motzkin Number and $k$-ary Trees}
\label{subsec: rooted k-ary trees}

For one final collection of combinatorial interpretations, we turn our attention to $k$-ary trees.  For any $k \geq 1$, a $k$-ary tree is a rooted tree in which every vertex has at most $k$ children.  A complete $k$-ary tree is a $k$-ary tree in which every vertex has either $0$ children or $k$ children.  We denote the set of all $k$-ary trees with precisely $n$ edges by $\mathcal{T}_n^k$, and the collection of all complete $k$-ary trees with precisely $n$ edges by $\mathcal{K}_n^k$.

It is well known that $2$-ary trees are enumerated by the Motzkin numbers as $\vert \mathcal{T}_n^2 \vert = M_{n,0}(1,1)$, and that complete $k$-ary trees are enumerated by the $k$-Catalan numbers as $\vert \mathcal{K}_{kn}^k \vert = C_n^k = M_{n,0}^k(\vec{0},\vec{0})$ for every $k \geq 2$.  See Aigner \cite{Aigner1} or Hilton and Pedersen \cite{HP} for bijections establishing these results.  These are the combinatorial interpretations that we look to generalize in this subsection.

In order to consistently describe our generalized bijection, we represent $k$-ary trees so that the root lies at the top of the tree and all children always appear lower than their parents.  One may then order the vertices of $T \in \mathcal{T}_n^k$ via a depth-first search, from left-to-right, and label each edge with one less than the integer assigned to the vertex at its bottom end.  See the left side of Figure \ref{3fig: tree bijection} for an example.  This edge ordering may be used to define a generalized bijection $\mathcal{T}_n^k$ and $\mathcal{M}_{n,0}^{k-1}(\vec{1},\vec{1})$:

\begin{proposition}
For any $n \geq 0$ and $k \geq 2$, $\vert \mathcal{T}_n^k \vert = M_{n,0}^{k-1}(\vec{1},\vec{1})$, where $\vec{1} = (1,1,\hdots,1)$.
\end{proposition}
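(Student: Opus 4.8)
The plan is to first strip away the coloring and then build an explicit bijection of Łukasiewicz type. Since every $D_i$ step with $0 \le i \le k-2$ receives exactly one color (as $\beta_i = \alpha_i = 1$) and the steps $U$ and $D_{k-1}$ are uncolored by definition, the quantity $M_{n,0}^{k-1}(\vec{1},\vec{1})$ is simply the number of uncolored order-$(k-1)$ Motzkin excursions: lattice paths of length $n$ from $(0,0)$ to $(n,0)$ with step set $\lbrace U, D_0, \dots, D_{k-1} \rbrace$ remaining weakly above $y=0$. Thus it suffices to biject $\mathcal{T}_n^k$ with this set of excursions. As a quick sanity check one can verify, via the root decomposition of a $k$-ary tree into an ordered list of $0 \le j \le k$ subtrees, that $T(t) = \sum_n |\mathcal{T}_n^k|\, t^n$ satisfies $T = \sum_{j=0}^{k} (tT)^j$; on the other side, Corollary~\ref{2thm: colored Motzkin triangles as Riordan arrays} gives $h(t) = t\, M_0^{k-1}(\vec{1},\vec{1},t)$, and feeding the $A$-sequence $A(t) = \sum_{j=0}^k t^j$ of Proposition~\ref{2thm: higher-order Motzkin triangle as Riordan array} into the relation $h(t) = t\,A(h(t))$ of \eqref{1eq: A and Z sequences vs d(t),h(t)} forces $M_0^{k-1}(\vec{1},\vec{1},t)$ to satisfy the identical equation with the same constant term, so the two series coincide.

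For the bijection itself I would formalize the DFS edge-ordering described above as a \emph{reflected} Łukasiewicz encoding. Reading the $n+1$ vertices of $T$ in depth-first (preorder) order and emitting, for each vertex $v$ of out-degree $c_v$, a step of vertical displacement $c_v - 1$, produces the classical Łukasiewicz word: a path of length $n+1$ with steps in $\lbrace -1, 0, 1, \dots, k-1 \rbrace$ that stays weakly above $y=0$ and ends at height $-1$. Because the last vertex visited in preorder is always a leaf, the final step of this word is always $-1$; deleting it leaves a length-$n$ path $P^\ast$ with the same step set that stays weakly above $y=0$ and ends at height $0$. Finally I reverse the sequence of steps of $P^\ast$ and negate each one, obtaining a path $P$ with steps in $\lbrace -(k-1), \dots, -1, 0, 1 \rbrace$, which is exactly the order-$(k-1)$ Motzkin step set $\lbrace D_{k-1}, \dots, D_1, D_0, U \rbrace$.

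The crux is checking that $P$ is a genuine excursion. Writing $S_m$ for the height of $P^\ast$ after $m$ steps, the reversal-and-negation sends the height of $P$ after $m$ steps to $S_{n-m}$; since every $S_j \ge 0$ and $S_n = 0$, the path $P$ stays weakly above $y=0$ and returns to height $0$, as required. This ballot-type invariance, together with the standard fact that the Łukasiewicz word recovers $T$, yields invertibility: from a Motzkin excursion one reverses-and-negates, appends a terminal $-1$ step, and decodes the resulting Łukasiewicz word into a plane tree with all out-degrees $c_v \le k$. I expect the main obstacle to be bookkeeping rather than conceptual: one must track the off-by-one between the $n+1$ vertices of $T$ and the length-$n$ path $P$ (handled by the appended/deleted terminal down step and the fact that the final preorder vertex is always a leaf), and one must confirm that reading the DFS edge-labels in the prescribed order genuinely produces the reflected word $P$ rather than $P^\ast$. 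Once the nonnegativity-preservation lemma and this indexing are pinned down, the equality $|\mathcal{T}_n^k| = M_{n,0}^{k-1}(\vec{1},\vec{1})$ follows at once.
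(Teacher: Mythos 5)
Your proposal is correct, but it takes a genuinely different route from the paper's. The paper's bijection reads the $n$ \emph{edges} of $T$ in depth-first order, emitting a $U$ step for each edge that is not a rightmost child and a $D_{i-1}$ step for the rightmost child of a vertex with $i$ children; its inverse is recovered by matching each $D_i$ step to the $i$ up-steps ``visible'' to it from the left. Your map instead composes the classical vertex-based \L ukasiewicz encoding (preorder, displacement $c_v-1$) with deletion of the forced terminal $-1$ step and a reverse-and-negate reflection. The two maps genuinely differ as functions: for the $2$-ary tree whose root has two children and whose right child has one child, the paper's map yields $UD_1D_0$ while yours yields $D_0UD_1$. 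Both send a vertex of out-degree $j\geq 1$ to a $D_{j-1}$ step and a non-final leaf to a $U$ step, so both specialize to give Corollary \ref{3thm: trees and Motzkin paths corollary}. What your route buys is that weak nonnegativity of the image path comes for free from the standard ballot property of \L ukasiewicz words together with the elementary fact that reversing and negating an excursion yields an excursion, whereas the paper must argue directly that the DFS ordering places each $D_{i-1}$ after the $U$ steps of its $i-1$ non-rightmost siblings; the cost is exactly the off-by-one bookkeeping between $n+1$ vertices and $n$ steps that you flag, which is handled correctly by the forced terminal $-1$. Finally, note that what you call a ``sanity check'' is in fact a complete second proof requiring no bijection at all: by Proposition \ref{2thm: higher-order Motzkin triangle as Riordan array}, Corollary \ref{2thm: colored Motzkin triangles as Riordan arrays}, and \eqref{1eq: A and Z sequences vs d(t),h(t)}, both $T(t)$ and $M_0^{k-1}(\vec{1},\vec{1},t)$ satisfy $F=\sum_{j=0}^{k}(tF)^j$, and this functional equation forces $F(0)=1$ and determines $[t^n]F$ recursively from lower-order coefficients, so it has a unique power-series solution.
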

\label{3thm: tree to Motzkin path bijection}
\begin{proof}
To define our bijection $\phi: \mathcal{T}_n^k \rightarrow \mathcal{M}_{n,0}^{k-1}(\vec{1},\vec{1})$, proceed through the edges of $T \in \mathcal{T}_n^k$ in the order defined by our depth-first search.  Then construct the path $\phi(T) \in \mathcal{M}_{n,0}^{k-1}(\vec{1},\vec{1})$ as follows, from left to right.  If a particular edge is not a rightmost child, append a $U$ step to the end of the partial path.  If an edge is a rightmost child, and if the vertex at its top end has precisely $i$ children, append a $D_{i-1}$ step to the end of the partial path.

The resulting path $\phi(T)$ clearly ends at $(n,0)$ and uses the correct step set for an element of $\mathcal{M}_{n,0}^{k-1}(\vec{1},\vec{1})$.  The fact that $\phi(T)$ remains weakly above $y=0$ is a consequence of the depth-first search: in our edge ordering, the children of any fixed vertex have labels that increase from left to right.  This means that the $D_{i-1}$ step associated with the rightmost child is always added after the $U$ steps of its $i-1$ non-rightmost siblings.

To see that $\phi$ is a bijection, notice that $T$ may be uniquely recovered from $\phi(T)$ as follows.  For every $D_i$ step in $\phi(T)$ with $i \geq 1$, identify the $i$ total $U$ steps that are ``visible" to that $D_i$ step from the left.  These matchings (which we represent via horizontal ``lasers" that travel under $\phi(T)$) correspond to siblings in the associated tree $T$.  Nesting of laser matchings in $\phi(T)$ correspond to parent/child relationships in $T$.  See the right side of Figure \ref{3fig: tree bijection} for an example of this inverse procedure.
\end{proof}

\begin{figure}[ht!]
\centering
\begin{tikzpicture}
    [scale=.45, auto=left, every node/.style = {circle, fill=black, inner sep=1.25pt}]
    \node(1*) at (4,6) {};
    \node(2*) at (2,4) {};
    \node(3*) at (4,4) {};
    \node(4*) at (6,4) {};
    \node(5*) at (0,2) {};
    \node(6*) at (2,2) {};
    \node(7*) at (6,2) {};
    \node(8*) at (2,0) {};
    \node(9*) at (4,0) {};
    \node(10*) at (6,0) {};
    \node(11*) at (8,0) {};
    \draw[thick] (1*) to (2*);
    \draw[thick] (1*) to (3*);
    \draw[thick] (1*) to (4*);
    \draw[thick] (2*) to (5*);
    \draw[thick] (2*) to (6*);
    \draw[thick] (4*) to (7*);
    \draw[thick] (6*) to (8*);
    \draw[thick] (7*) to (9*);
    \draw[thick] (7*) to (10*);
    \draw[thick] (7*) to (11*);
	\node[fill=none](1) at (2.55,5.25) {1};
	\node[fill=none](5) at (3.65,4.75) {5};
	\node[fill=none](6) at (5.35,5.25) {6};
	\node[fill=none](2) at (.6,3.25) {2};
	\node[fill=none](3) at (2.4,3) {3};
	\node[fill=none](7) at (5.6,3) {7};
	\node[fill=none](4) at (2.4,1) {4};
	\node[fill=none](8) at (4.65,1.25) {8};
	\node[fill=none](9) at (5.65,.65) {9};
	\node[fill=none](10) at (7.55,1.25) {10};	
\end{tikzpicture}
\hspace{.25in}
\scalebox{2.5}{\raisebox{12pt}{$\Leftrightarrow$}}
\hspace{.3in}
\raisebox{16pt}{
\begin{tikzpicture}
    [scale=.65, auto=left, every node/.style = {circle, fill=black, inner sep=1.25pt}]
    \draw[thick, dotted, color = black] (0,0) to (10,0);
    \node(0*) at (0,0) {};
    \node(1*) at (1,1) {};
    \node(2*) at (2,2) {};
    \node(3*) at (3,1) {};
    \node(4*) at (4,1) {};
    \node(5*) at (5,2) {};
    \node(6*) at (6,0) {};
    \node(7*) at (7,0) {};
    \node(8*) at (8,1) {};
    \node(9*) at (9,2) {};
    \node(10*) at (10,0) {};
	\draw[color=red, dotted, thick] (.5,.5) to (5.7,.5);
	\draw[color=red, dotted, thick] (1.5,1.5) to (2.5,1.5);
	\draw[color=red, dotted, thick] (4.5,1.5) to (5.25,1.5);
	\draw[color=red, dotted, thick] (7.5,.5) to (9.7,.5);
	\draw[color=red, dotted, thick] (8.5,1.5) to (9.25,1.5);  
    \draw[thick] (0*) to (1*);
    \draw[thick] (1*) to (2*);
    \draw[thick] (2*) to (3*);
    \draw[thick] (3*) to (4*);
    \draw[thick] (4*) to (5*);
    \draw[thick] (5*) to (6*);
    \draw[thick] (6*) to (7*);
    \draw[thick] (7*) to (8*);
    \draw[thick] (8*) to (9*);
    \draw[thick] (9*) to (10*);
\end{tikzpicture}}
\caption{An example of the bijection between $\mathcal{T}_n^k$ and $\mathcal{M}_{n,0}^{k-1}(\vec{1},\vec{1})$ from the proof of Theorem \ref{3thm: tree to Motzkin path bijection}.  The tree of the left side exhibits our depth-first edge ordering, whereas the dotted red lines on the right side correspond to the ``lasers" used in defining the inverse map.}
\label{3fig: tree bijection}
\end{figure}
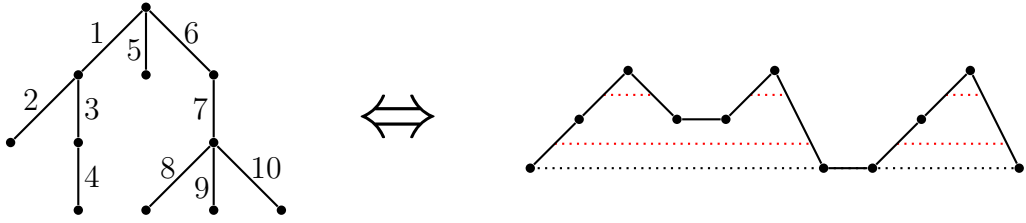

The bijection of Theorem \ref{3thm: tree to Motzkin path bijection} directly prompts a combinatorial interpretation for any sequence of $(\vec{\alpha},\vec{\beta})$-colored Motzkin numbers where the vectors $\vec{\alpha}$ and $\vec{\beta}$ are composed entirely of zeroes and ones:

\begin{corollary}
\label{3thm: trees and Motzkin paths corollary}
Fix $k \geq 2$, and let $S$ be an subset of $\lbrace 0,1,\hdots, k-1 \rbrace$. Then define $\mathcal{T}_n^{k,S} \subseteq \mathcal{T}_n^k$ to be the collection of all $k$-ary trees where every vertex must have either $k$ children or precisely $i$ children for some $i \in S$.  Then $\vert \mathcal{T}_n^{k,S} \vert = M_{n,0}^{k-1}(\vec{\alpha}_S,\vec{\alpha}_S)$, where $\vec{\alpha}_S = (\alpha_0,\hdots,\alpha_{k-1})$ is the $(k-1)$-tuple such that $\alpha_i = 1$ if $i \in S$ and $\alpha_i = 0$ otherwise.
\end{corollary}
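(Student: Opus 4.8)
The plan is to show that the bijection $\phi : \mathcal{T}_n^k \to \mathcal{M}_{n,0}^{k-1}(\vec{1},\vec{1})$ of Theorem~\ref{3thm: tree to Motzkin path bijection} restricts to a bijection between $\mathcal{T}_n^{k,S}$ and $\mathcal{M}_{n,0}^{k-1}(\vec{\alpha}_S,\vec{\alpha}_S)$. Since $\phi$ is already a bijection on the full sets, it suffices to check that $\phi$ carries the defining restriction of $\mathcal{T}_n^{k,S}$ (on the admissible numbers of children) exactly onto the defining restriction of the target coloring (on which down-steps may appear), so that the restricted domain and codomain correspond.

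First I would record precisely how $\phi$ encodes the number of children of a vertex. By construction of $\phi$, a vertex $v$ with exactly $i$ children (with $1 \le i \le k$) contributes exactly one down-step to $\phi(T)$, namely the $D_{i-1}$ step coming from the edge to its rightmost child, while its other $i-1$ child-edges contribute $U$ steps; leaves contribute no down-step at all. Conversely, every down-step of $\phi(T)$ arises from a unique internal vertex in this way. Hence, for each $1 \le i \le k$, the vertices of $T$ with exactly $i$ children are in bijection with the $D_{i-1}$ steps of $\phi(T)$, and forbidding a given number of children $i$ in $T$ is equivalent to forbidding the step $D_{i-1}$ in $\phi(T)$.

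Next I would translate this into the language of colorings. In an $(\vec{\alpha},\vec{\beta})$-colored path a step type $D_j$ is permitted exactly when its color count is positive and is forbidden outright when that count is $0$, so that a count of $1$ makes the step either uniquely colored or absent. Taking $\vec{\alpha} = \vec{\beta} = \vec{\alpha}_S$ imposes the same permitted/forbidden pattern at every height, whence $\mathcal{M}_{n,0}^{k-1}(\vec{\alpha}_S,\vec{\alpha}_S)$ is exactly the set of paths in $\mathcal{M}_{n,0}^{k-1}(\vec{1},\vec{1})$ whose down-steps $D_j$ occur only for those $j$ with $(\vec{\alpha}_S)_j = 1$. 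Matching this against the previous paragraph, $\phi$ sends $\mathcal{T}_n^{k,S}$ onto precisely this set, the admissible child-counts corresponding to the admissible down-step indices, while the always-allowed full vertices ($i=k$) correspond to the always-present uncolored maximal step $D_{k-1}$. Combining these observations yields $\vert \mathcal{T}_n^{k,S}\vert = M_{n,0}^{k-1}(\vec{\alpha}_S,\vec{\alpha}_S)$.

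The main obstacle I anticipate is the index bookkeeping in this correspondence: a vertex with $i$ children maps to a $D_{i-1}$ step and is therefore governed by the color $(\vec{\alpha}_S)_{i-1}$, so one must align $S$ with $\vec{\alpha}_S$ under this shift. Equally delicate are the two uncolored extremes, namely leaves ($0$ children), which produce no down-step and so interact with the coloring only through whether the single-vertex and leaf structures are admitted at all, and full vertices ($k$ children), whose maximal step $D_{k-1}$ carries no color and is always allowed. Verifying that these boundary cases are consistent with the stated tuple $\vec{\alpha}_S$, and in particular with the base case $n=0$, is where the argument demands the most care.
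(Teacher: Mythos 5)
Your overall strategy---restricting the bijection $\phi$ of Theorem \ref{3thm: tree to Motzkin path bijection}---is exactly what the paper intends, since the corollary is stated there without proof as an immediate consequence of that bijection, and your second paragraph correctly isolates the key fact: a vertex with $i\geq 1$ children contributes exactly one $D_{i-1}$ step to $\phi(T)$, while leaves contribute no step at all. Your description of $\mathcal{M}_{n,0}^{k-1}(\vec{\alpha}_S,\vec{\alpha}_S)$ as the uncolored paths in $\mathcal{M}_{n,0}^{k-1}(\vec{1},\vec{1})$ that avoid every $D_j$ with $(\vec{\alpha}_S)_j=0$ is also correct.

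The gap is that your third paragraph then declares that ``the admissible child-counts correspond to the admissible down-step indices,'' which is precisely the off-by-one you flag---but never resolve---in your closing paragraph. Because a vertex with $i$ children maps to $D_{i-1}$, the color $\alpha_j$ governs vertices with $j+1$ children, and the coloring places no constraint whatsoever on leaves. Hence $\phi^{-1}$ carries $\mathcal{M}_{n,0}^{k-1}(\vec{\alpha}_S,\vec{\alpha}_S)$ onto the set of trees in which every vertex has $0$, $k$, or $j+1$ children for some $j\in S$, which is not the set $\mathcal{T}_n^{k,S}$ as defined in the statement. The two families genuinely differ: for $k=2$ and $S=\{0\}$ the statement as written would equate complete binary trees with $n$ edges (of which there are none when $n$ is odd, and Catalan-many when $n$ is even) with $M_{n,0}^{1}(1,1)=M_n$, and this already fails at $n=1$. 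So the verification you defer in your last paragraph cannot succeed for the tuple $\vec{\alpha}_S$ as stated; to complete the argument you must carry out the shift explicitly, replacing the admissible child counts by $\lbrace 0,k\rbrace\cup\lbrace j+1 : j\in S\rbrace$ (equivalently, setting $\alpha_{i-1}=1$ exactly when a vertex may have $i$ children, with leaves always permitted). With that correction the restriction argument you outline goes through verbatim.
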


\appendix

\section{Tables of $(\vec{\alpha},\vec{\beta})$-Colored Motzkin Numbers}
\label{sec: appendix}

A Java program was written that used Proposition \ref{2thm: higher-order Motzkin triangle as Riordan array} to generate the first seven rows of the $(\vec{\alpha},\vec{\beta})$-colored Motzkin triangle, for order $\ell=1,2,3$ and arbitrary choices of $\vec{\alpha},\vec{\beta}$.  The first columns of those Riordan arrays were then checked against OEIS \cite{OEIS} for pre-existing combinatorial interpretations.  The results of our comparisons are shown below for $\ell=1,2$ and various ``easy" choices of $\vec{\alpha},\vec{\beta}$.  Dashes correspond to sequences that failed to return an entry on OEIS.  Java code is available upon request.

\begin{table}[ht!]
\centering
\def\arraystretch{1.3}
\scalebox{.9}{
\begin{tabular}{|c|c|c|c|c|c|c|}
\hline
 & \textbf{$\boldsymbol{\beta=0}$} & \textbf{$\boldsymbol{\beta=1}$} & \textbf{$\boldsymbol{\beta=2}$} & \textbf{$\boldsymbol{\beta=3}$} & \textbf{$\boldsymbol{\beta=4}$} & \textbf{$\boldsymbol{\beta=5}$} \\ \hline
\textbf{$\boldsymbol{\alpha=0}$} & A126120 & A005043 & A000957 & A1177641 & A185132 & - \\ \hline
\textbf{$\boldsymbol{\alpha=1}$} & A001405 & A001006 & A000108 & A033321 & - & - \\ \hline
\textbf{$\boldsymbol{\alpha=2}$} & A054341 & A005773 & A000108* & A007317 & A033543 & - \\ \hline
\textbf{$\boldsymbol{\alpha=3}$} & A126931 & A059738 & A001700 & A002212 & A064613 & - \\ \hline
\textbf{$\boldsymbol{\alpha=4}$} & - & - & A049027 & A026378 & A005572 & A104455 \\ \hline
\textbf{$\boldsymbol{\alpha=5}$} & - & - & A076025 & - & A005573 & A182401 \\ \hline
\end{tabular}}
\caption{An expansion of Table \ref{1tab: Catalan-like numbers}, showing integer sequences corresponding to the $(\alpha,\beta)$-colored Motzkin numbers $M_{n,0}^1(\alpha,\beta)$ of order $\ell=1$, for various choices of $(\alpha,\beta)$.}
\label{appendix ell=1}
\end{table}

\begin{table}[ht!]
\centering
\def\arraystretch{1.3}
\scalebox{.9}{
\begin{tabular}{|c|c|c|c|c|c|c|}
\hline
 & $\boldsymbol{\beta=0}$ & $\boldsymbol{\beta=1}$ & $\boldsymbol{\beta=2}$ & $\boldsymbol{\beta=3}$ & $\boldsymbol{\beta=4}$ & $\boldsymbol{\beta=5}$ \\ \hline
$\boldsymbol{\alpha=0}$ & $\binom{n}{\lfloor \frac{n}{2} \rfloor}$ & A002426& A026641 & A126952 & - & - \\ \hline
$\boldsymbol{\alpha=1}$ & A000079 & A005773 & A000984 & A126568 & A227081 & - \\ \hline
$\boldsymbol{\alpha=2}$ & A127358 & A000244 & $\binom{2n+1}{n+1}$ & A026375 & A133158 & - \\ \hline
$\boldsymbol{\alpha=3}$ & A127359 & A126932  & A000302 & A026378 & A081671 & - \\ \hline
$\boldsymbol{\alpha=4}$ & A127360 & - & A141223 & - & A005573 & A098409 \\ \hline
$\boldsymbol{\alpha=5}$ & - & - & - & - & A000400 & A122898 \\ \hline
\end{tabular}}
\caption{Integer sequences $\lbrace r_n (\alpha,\beta) \rbrace_{n=0}^\infty$ corresponding to row sums $r_n = \sum_{i=0}^n M_{n,m}^1(\alpha,\beta)$ of the $(\alpha,\beta)$-Motzkin triangle of order $\ell=1$, for various choices of $(\alpha,\beta)$.  By Theorem \ref{2thm: row-sums of Motzkin triangle, color change version}, the $(i,i)$ entries of this table equal the $(i+1,i)$ entries of Table \ref{appendix ell=1}.}
\label{appendix ell=1 row sums}
\end{table}

\begin{table}[ht!]
\centering
\def\arraystretch{1.3}
\scalebox{.87}{
\begin{tabular}{|c|c|c|c|}
\hline
 & $\boldsymbol{\vec{\beta}=(0,0)}$ & $\boldsymbol{\vec{\beta}=(1,0)}$ & $\boldsymbol{\vec{\beta}=(2,0)}$ \\ \hline
$\boldsymbol{\vec{\alpha}=(0,0)}$ & - & - & - \\ \hline
$\boldsymbol{\vec{\alpha}=(1,0)}$ & A076227 & A071879 & - \\ \hline
$\boldsymbol{\vec{\alpha}=(2,0)}$ & - & - & - \\ \hline
\end{tabular}
\hspace{.04in}
\begin{tabular}{|c|c|c|c|}
\hline
 & $\boldsymbol{\vec{\beta}=(0,1)}$ & $\boldsymbol{\vec{\beta}=(1,1)}$ & $\boldsymbol{\vec{\beta}=(2,1)}$ \\ \hline
$\boldsymbol{\vec{\alpha}=(0,1)}$ & A001005 & - & A303730 \\ \hline
$\boldsymbol{\vec{\alpha}=(1,1)}$ & - & A036765 & A049128 \\ \hline
$\boldsymbol{\vec{\alpha}=(2,1)}$ & - & A159772 & - \\ \hline
\end{tabular}
}
\caption{Integer sequences corresponding to the $(\vec{\alpha},\vec{\beta})$-colored Motzkin numbers $M_{n,0}^2(\vec{\alpha},\vec{\beta})$ of order $\ell=2$, for various choices of $\vec{\alpha}=(\alpha_0,\alpha_1)$, $\vec{\beta}=(\beta_0,\beta_1)$ with $\alpha_0=\beta_0=0$ (left) and $\alpha_1=\beta_1=1$ (right).}
\label{appendix ell=2 01}
\end{table}

\begin{table}[ht!]
\centering
\def\arraystretch{1.3}
\scalebox{.9}{
\begin{tabular}{|c|c|c|c|c|c|}
\hline
 & $\boldsymbol{\alpha_1=0}$ & $\boldsymbol{\alpha_1=1}$ & $\boldsymbol{\alpha_1=2}$ & $\boldsymbol{\alpha_1=3}$ & $\boldsymbol{\alpha_1=4}$\\ \hline
$\boldsymbol{\alpha_0=0}$ & - & A089354 & A023053 & - & - \\ \hline
$\boldsymbol{\alpha_0=1}$ & - & - & A001764 & A121545 & - \\ \hline
$\boldsymbol{\alpha_0=2}$ & - & - & A098746 & A006013 & - \\ \hline
$\boldsymbol{\alpha_0=3}$ & - & - & - & A001764* & - \\ \hline
$\boldsymbol{\alpha_0=4}$ & - & - & - & A047099 & - \\ \hline
\end{tabular}}
\caption{Integer sequences corresponding to the $(\vec{\alpha},\vec{\beta})$-colored Motzkin numbers $M_{n,0}^2(\vec{\alpha},\vec{\beta})$ of order $\ell=2$, for various choices of $\vec{\alpha}=(\alpha_0,\alpha_1)$ when $\vec{\beta}=(3,3)$.  When $\ell=2$, observe that these cover all choices of $(\vec{\alpha},\vec{\beta})$ relevant to Subsections \ref{subsec: k-Dyck paths} and \ref{subsec: k-Fine paths}.}
\label{appendix ell=2 33}
\end{table}

\end{document}